\pgfplotsset{compat=1.15}
\newcommand{\norm}[1]{\left\lVert#1\right\rVert}
\let\nic\nearrow
\def\nearrow{\mathbin{{\nic}}}
\DeclareMathOperator{\sign}{sign}
\theoremstyle{plain}
\newtheorem{theorem}{Theorem}[section]
\newtheorem{prop}[theorem]{Proposition}
\newtheorem{lemma}[theorem]{Lemma}
\newtheorem{corollary}[theorem]{Corollary}
\theoremstyle{definition}
\newtheorem{definition}{Definition}[section]
\newtheorem{example}{Example}
\newtheorem{remark}{\textnormal{\textbf{Remark}}}[section]
\newtheorem*{remark*}{\textnormal{\textbf{Remark}}}
\begin{document}
\title{Conformal mappings on the Grushin plane}
\author{Marcin Walicki}
\address{Warsaw University of Technology,\\
Faculty of Mathematics and Information Science\\
ul. Koszykowa 75, 00--662, Warsaw, Poland; marcin.walicki.dokt@pw.edu.pl}

\pagenumbering{arabic}
\definecolor{zzttqq}{rgb}{0.6,0.2,0}
\definecolor{xdxdff}{rgb}{0.49019607843137253,0.49019607843137253,1}
\definecolor{ududff}{rgb}{0.30196078431372547,0.30196078431372547,1}
\begin{abstract}
We study conformal mappings in the Grushin plane and provide a number of their characterizations in terms of the Sobolev mappings and their geometry. Furthermore, we connect conformality on the Grushin plane with conformality on the complex plane by using the Meyerson map.  Among applications we discuss admissible curves and length-distortion estimates in the Grushin plane, as well as the Carath\'eodory extension theorem.

\smallskip
\noindent \textbf{Keywords:} Conformal maps, Grushin plane, holomorphic functions, Meyerson operator, Sobolev mappings, sub--Riemannian manifolds.

\smallskip
\noindent \textbf{2020 Mathematics Subject Classification:} Primary: 30C20; Secondary: 53C17.
\end{abstract}
\maketitle
\section{Introduction}
One of the cornerstones of the geometric analysis is the theory of conformal mappings. In the past century or so the conformal maps have influenced development of quasiconformal mappings and their generalizations, such as quasisymmetric mappings and, more recently, mappings of finite distortion. Moreover, the mapping theory can be studied not only in the Euclidean setting but also beyond it, for instance in Riemannian manifolds and sub-Riemannian setting, see e.g. \cite{Sub-Riemannian geometrybellaiche}, \cite{Cap Cit LD Ott}, \cite{Cowling Ottazzi} as well as in metric measure spaces, see e.g. \cite{Heinonen Koskela}, \cite{Heinonen lectures}. In this note we focus our attention on conformal mappings in the Grushin plane.

The ($\alpha$--)Grushin plane, denoted as $G^2_{\alpha}$, is one of the simplest examples of sub--Riemannian manifolds. Partial differential equations defined on that plane, and more generally on the Grushin spaces, as well as their geometry were subject of recent research, see e.g. \cite{TBieskeJGong}, \cite{TBieske}, \cite{WJM}. The Grushin plane is a modification of the Euclidean plane. One could roughly imagine its structure as the Euclidean plane with singularity being the $y$--axis, which pushes the particles away under the flow given by the horizontal vectors. The Grushin plane posseses the Riemannian metric outside of the singular line, and so, the $y$--axis is the key set, from the perspective of sub--Riemannian geometry.

A new approach to quasisymmetric mappings on the Grushin plane has been introduced in \cite{Ackermann} and later developed for quasiconformal maps in \cite{Quasiconformal mappings on the Grushin plane}. Moreover, \cite{Ackermann} introduces also a new notion of conformal mappings between domains in the Grushin plane. This approach is the starting point for our considerations. Upon recalling and introducing some necessary definitions in Section 2, the following is our main result. We note that $HW_{\textrm{loc}}^{1,p}$ is the so--called \textit{local horizontal Sobolev space} and that sets $S_g$ and $R_g$ stand for \textit{singular set} and \textit{regular set} of $g$ respectively, see Preliminaries for more detailed discussion.

\begin{theorem}[Main theorem]\label{main theorem}
    Let $\Omega, \Omega'$ be open sets in $G^2_{\alpha}$. Let $g=(g_1,g_2)$ be a Sobolev map, $g \in HW^{1,\frac{2(\alpha+1)}{\alpha}}_{\textrm{loc}}(\Omega,\Omega')$. Then the following hold:
    \begin{enumerate}[label=(\alph*)]
        \item If $\overline{W}g = 0$ a.e. in $\Omega$, then $g=\hat{g}$ a.e. in $\Omega$, where the representative $\hat{g}~\in~C^{\infty}(R_{\hat{g}})~\cap~C(\Omega)$.
        \item If $g$ is an orientation preserving bijection, then $\lvert S_g \rvert =0$. In particular, if $g$ is conformal and orientation preserving map, then $\overline{W}g =0$ a.e. in $\Omega$.
        \item If $g$ is a homeomorphism, then $g$ is conformal and orientation preserving if and only if $\overline{W}g=0$ a.e. in $\Omega$ and $\{(x,y) \in \Omega \ | \ g_1(x,y)=0\} = \Omega \cap \{x=0\}$.
        \item If $g$ is a homeomorphism and the number of connected components of set $\Omega \cap \{x=0\}$ is finite, then $g$ is conformal and orientation preserving if and only if $\overline{W}g=0$ a.e. in $\Omega$ and the number of connected components of set $\Omega' \cap \{x=0\}$ is the same as number of connected components of $\Omega \cap \{x=0\}$.
    \end{enumerate}
 
\end{theorem}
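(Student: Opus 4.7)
The unifying idea throughout the four parts is the Meyerson map $\Phi$, which conjugates the Grushin operator $\overline{W}$ with the classical Cauchy--Riemann operator $\overline{\partial}$ on $\mathbb{C}$. Via this correspondence the equation $\overline{W}g=0$ on the regular set becomes $\overline{\partial}\tilde g=0$ for a suitable transform $\tilde g$ on an open subset of $\mathbb{C}$, opening the door to the full theory of holomorphic functions. The chosen Sobolev exponent $\frac{2(\alpha+1)}{\alpha}$ plays the role of the conformal exponent for $G^2_\alpha$: it is precisely the threshold at which the Grushin--Sobolev embedding yields continuous representatives and at which $g$ satisfies Lusin's property $(N)$, both of which are used repeatedly below.

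To prove (a), I would push the weak equation $\overline{W}g=0$ through $\Phi$ to obtain $\overline{\partial}\tilde g=0$ weakly on $\Phi(R_g)$, and then invoke Weyl's lemma to upgrade $\tilde g$ to a holomorphic, hence $C^{\infty}$, function. Pulling back yields a representative $\hat g\in C^{\infty}(R_{\hat g})$. Continuity across the singular line would follow from the Grushin Morrey-type embedding at the conformal exponent, which gives local H\"older continuity of a Sobolev representative in the Grushin metric; combined with smoothness on the open regular set this yields $\hat g\in C(\Omega)$.

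For (b), the first claim follows from an area-formula argument: since $g$ is an orientation-preserving Sobolev bijection at the conformal exponent it satisfies Lusin's condition $(N)$, and hence the preimage $S_g$ of the null singular line in $\Omega'$ must itself be null. The second claim is a direct unwinding of the pointwise conformality condition from \cite{Ackermann}: expanding equality of horizontal derivative norms and orientation preservation in the horizontal frame produces exactly the Grushin Cauchy--Riemann identity on $R_g$, and by the first claim this extends a.e.\ to all of $\Omega$. Part (c) is then handled by combining (a) and (b): the forward direction uses (b), together with the observation that a conformal homeomorphism must carry the singular line of $\Omega$ onto that of $\Omega'$ (regularity at a point requires a well-defined horizontal differential), while the reverse direction uses (a) to produce a smooth Meyerson-holomorphic representative on $R_g$ and the hypothesis $\{g_1=0\}=\Omega\cap\{x=0\}$ to verify pointwise conformality across the singular set.

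Finally, (d) is a reduction to (c), and its reverse implication is the main technical obstacle: from the matching of integer counts alone one must deduce the set-level equality $\{g_1=0\}=\Omega\cap\{x=0\}$. My plan is a topological and combinatorial argument. A homeomorphism $g$ whose Meyerson transform is holomorphic on $R_g$ must send each component of $\Omega\cap\{x=0\}$ into a component of the closed set $\{g_1=0\}$, and continuity forces the image of such a component to be connected and to lie inside $\Omega'\cap\{x=0\}$ once the numerical matching is invoked. A pigeonhole on the equal finite counts then forces the induced map on components to be a bijection onto $\Omega'\cap\{x=0\}$, giving the set equality needed to invoke (c). The subtle point I expect to need most care is ruling out that a component of $\Omega\cap\{x=0\}$ is mapped into the regular set $R_{g'}$, which is where the holomorphicity on $R_g$ and the homeomorphism hypothesis must be used jointly.
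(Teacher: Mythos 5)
Your overall frame (conjugation by the Meyerson map, Weyl/Gray--Morris to get holomorphy, an area-formula argument for (b), and reduction of (d) to (c) by counting components) matches the paper, but two essential pieces are missing or wrong. First, the analytic heart of (c) and (d) is absent: one must prove that for a homeomorphism $g\in HW^{1,\frac{2(\alpha+1)}{\alpha}}_{\textrm{loc}}$ with $\overline{W}g=0$ a.e., the first component $g_1$ cannot vanish at any point with $x\neq 0$, i.e.\ $S_g=\{x=0\}$. This is not a topological fact and it is exactly where the exponent $\frac{2(\alpha+1)}{\alpha}$ enters: the paper derives from $\overline{W}g=0$ the identity $\iint_B |\nabla_H g_1|^{\frac{2(\alpha+1)}{\alpha}} = \iint_B |\nabla_H g_2|^{\frac{2(\alpha+1)}{\alpha}}/|g_1|^{2(\alpha+1)}$ and shows that a zero of $g_1$ at a regular point would force the right-hand side to diverge (via harmonicity, the Lipschitz bound on $\widetilde g_1$, and non-integrability of $|z-z_0|^{-2}$ in the plane), contradicting the Sobolev hypothesis. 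Without this, your (d) collapses: your opening claim that a homeomorphism whose transform is holomorphic on $R_g$ ``must send each component of $\Omega\cap\{x=0\}$ into $\{g_1=0\}$'' is precisely one of the inclusions to be proven, and it is false without the count hypothesis (the paper's remark after the theorem gives a counterexample built from a translation). The paper's route is: the analytic step gives $\{g_1=0\}\subset\{x=0\}$, hence $\Omega'\cap\{x=0\}\subset g(\Omega\cap\{x=0\})$, and only then does the finite component-counting pin down the set equality needed for (c). Your ``subtle point'' is flagged but you offer no mechanism for it; your pigeonhole alone cannot supply it.

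Second, your continuity mechanism in (a) does not work. You propose holomorphy only on $\Phi(R_g)$ plus a ``Grushin Morrey-type embedding at the conformal exponent'' to get continuity across $\{x=0\}$; but the homogeneous dimension of $G^2_{\alpha}$ is $\alpha+2$, and $\frac{2(\alpha+1)}{\alpha}>\alpha+2$ only for $\alpha<\sqrt 2$, so the exponent is not supercritical in general and no embedding yields a continuous representative (nor is it critical, so your ``threshold'' remark is also off). The paper avoids this entirely: since $\overline{W}g=0$ a.e.\ in $\Omega$ and $\{x=0\}$ is null, the transformed map satisfies $\partial_{\bar z}\widetilde g=0$ a.e.\ on all of $\varphi_{\alpha}(\Omega)$, Gray--Morris gives a holomorphic representative there, and composing with the globally continuous $\varphi_{\alpha}^{\pm 1}$ gives $\hat g\in C(\Omega)\cap C^{\infty}(R_{\hat g})$. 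Note also that making Weyl/Gray--Morris applicable is itself nontrivial: one must show $\widetilde g\in L^1_{\textrm{loc}}$, which the paper gets from a dedicated integrability lemma ($|x|^{\alpha}|g_1|^{\alpha+1}\in L^1_{\textrm{loc}}$, extracted from the equation itself), plus ACL arguments identifying pointwise with weak derivatives; your sketch glosses over this. Finally, in (b) the property you need is Lusin $(N^{-1})$ (null preimages), not $(N)$; the paper obtains it from the change-of-variables formula and $J_g>0$ a.e., which is the correct version of your area-formula idea.
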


Among consequences of the main result we have the following corollaries:
\begin{itemize}
\item[(1)] conformal maps in the Grushin plane can be seen as distorted conformal mappings fixing the $y$--axis, see Theorem \ref{important theorem},
\item[(2)] conformal maps in Grushin plane preserve the admissible curves, see Corollary \ref{preserving admissible curves remark},
\item[(3)] a length-distortion estimate for conformal maps in Grushin plane, see Propostion \ref{length distortion estimate prop},
\item[(4)] a counterpart of the Carath\'eodory extension theorem for Jordan domains in Grushin plane, see Corollary \ref{caratheodory theorem on grushin}.

\end{itemize}
Furthermore, we address the failure of the Riemann mapping theorem on the Grushin plane, see Remark \ref{FoRMT}. Our discussion is illustrated by examples, see e.g. the Joukovski map in Grushin plane (Example \ref{Joukovski map on Grushin}).

\textbf{Acknowledgements:} I would like to deeply thank my academic advisors Tomasz Adamowicz and Krzysztof Che\l mi\'nski for their helpful, factual comments and invaluable help in editorial work. Moreover, I would like to thank Benjamin Warhurst for helpful conversations about conformal and quasi--conformal mappings.

\section{Preliminaries}
The $\alpha$--Grushin plane $G^2_{\alpha}$ is an Euclidean plane $\mathbb{R}^2$ equipped with sub--Riemannian (Carnot--Carath\'eodory) metric on $\mathbb{R}^2$ induced by vector fields
\begin{equation*}
    X = \partial_x, \qquad Y_{\alpha} = \lvert x \rvert^{\alpha} \partial_y, \quad \alpha>0.
\end{equation*}
The Carnot--Carath\'eodory metric is given by the formula
\begin{equation*}
    d_{\alpha}(p,q) := \inf_{\gamma} \int_0^1 \sqrt{\left(\frac{dx}{dt}\right)^2 + \frac{1}{\lvert x \rvert^{2\alpha}}\left(\frac{dy}{dt}\right)^2} \ dt,
\end{equation*}
where the infimum is taken over all curves $\gamma = (x,y):[0,1] \to \mathbb{R}^2$ with $\gamma(0)=p$, $\gamma(1)=q$, absolutely continuous, with respect to the Euclidean metric.

Let $p \ge 1$ and $\Omega \subset G^2_{\alpha}$ be an open set. The \textit{horizontal gradient} of a function $g:\Omega \to \mathbb{R}$ is defined as follows: $\nabla^{\alpha}_H g:=(Xg,Y_{\alpha}g)$. By using this notion, we define \textit{the horizontal Sobolev space} $HW^{1,p,\alpha}(\Omega)$ as a space consisting of such functions $g \in L^p(\Omega)$ whose first weak partial derivatives exist and $\nabla_H^{\alpha}g \in L^p(\Omega)$. Similarly, we define the \textit{horizontal Sobolev space of mappings} $HW^{1,p,\alpha}(\Omega,\Omega')$ as consisting of such maps $g=(g_1,g_2):\Omega \to \Omega'$, for $\Omega$, $\Omega'$ open sets in $G^2_{\alpha}$, that $g_i \in HW^{1,p,\alpha}(\Omega)$ for $i=1,2$. The \textit{local horizontal Sobolev space of mappings}, denoted by $HW^{1,p,\alpha}_{\textrm{loc}}(\Omega,\Omega')$ is defined accordingly, as the space of those maps $g \in HW^{1,p,\alpha}(U,\Omega')$ for every $U \subset \subset \Omega$, where the closure $\overline{U}$ is considered in the Euclidean topology on $\mathbb{R}^2$.

We note that, by definition of weak derivative (cf. Chapter 5 in \cite{Evans} or Chapter 4 in \cite{EvansG}), every function $g \in HW^{1,p,\alpha}_{\textrm{loc}}(\Omega)$ is also in the class $W^{1,1}_{\textrm{loc}}(\Omega)$.

We note that in the above definitions of \textit{local horizontal Sobolev spaces} we used the Lebesgue measure on $\mathbb{R}^2$ to define the $L^p$ spaces. Hence, we will treat a space $HW^{1,p,\alpha}_{\textrm{loc}}(\Omega,\Omega')$ with $\Omega \subset G^2_{\alpha}$ or $\Omega \subset \mathbb{C}$ and $\Omega' \subset G^2_{\alpha}$ or $\Omega' \subset \mathbb{C}$ in the same way.

In what follows, we will usually omit the power $\alpha$ for simplicity of presentation and notation.

Let $g \in HW^{1,1}_{\textrm{loc}}(\Omega)$ for an open set $\Omega \subset G^2_{\alpha}$. Below, when considering vector fields $X g = \frac{\partial g}{\partial x} $, $Y_{\alpha} g = \lvert x \rvert^{\alpha} \frac{\partial g}{\partial y}$, by the $\frac{\partial g}{\partial x} $ and $\frac{\partial g}{\partial y} $ we mean the usual Euclidean partial derivatives, which exist almost everywhere by the ACL characterization of Sobolev functions, see Theorem 4.21 in \cite{EvansG}. In Section 2 the meaning of partial derivatives and weak derivatives is mostly interchangeable, since we consider functions of the class $HW^{1,\frac{2(\alpha + 1)}{\alpha}}_{\textrm{loc}}(\Omega,\Omega')$, cf. Theorem 6.5 in \cite{EvansG}.

Let $M$ be a $C^{\infty}$ Riemannian manifold and $g:M \to M$ be a homeomorphism. Recall that $g$ is conformal if the pullback of the Riemannian metric by $g$ is equal to the metric multiplied by some positive function. Since we assume $M$ is $C^{\infty}$ we also have that $g$ is infinitely differentiable (\cite{Geometrical interpretations of scalar curvature}, page 103, Theorem A). The length element in metric on $G^2_{\alpha} \setminus \{x=0\}$ is 
\begin{equation*}
    dx^2 + \frac{dy^2}{\lvert x \rvert^{2 \alpha}}
\end{equation*}
and its pullback by a mapping $g:G^2_{\alpha} \to G^2_{\alpha}$, $g=(g_1,g_2)$ is defined as follows:
\begin{equation*}
\begin{aligned}
    &\left[(X(g_1))^2+ \frac{(X(g_2))^2}{\lvert g_1 \rvert^{2\alpha}}\right] dx^2 + \frac{1}{\lvert x \rvert^{2\alpha}}\left[(Y_{\alpha}(g_1))^2+ \frac{(Y_{\alpha}(g_2))^2}{\lvert g_1 \rvert^{2\alpha}} \right]dy^2\\
    &+\frac{2}{\lvert x \rvert^{\alpha}}\left[X(g_1)Y_{\alpha}(g_1) + \frac{X(g_2)Y_{\alpha}(g_2)}{\lvert g_1 \rvert^{2\alpha}}\right]dxdy.
\end{aligned}
\end{equation*}
The above calculations are valid in $G^2_{\alpha} \setminus (\{(x,y) \ | \ x=0\}\cup \{(x,y) \ | \ g_1(x,y)=0\})$. This gives us a motivation to define conformal mappings on the Grushin plane.

Based on Definition 12 in \cite{Ackermann} we propose the following definition of conformal mappings in the Grushin plane.
\begin{definition}[Cf. Definition 12 in \cite{Ackermann}]\label{definition of conformal}
    Suppose $\Omega$ and $\Omega'$ are open sets in $G^2_{\alpha}$ and $g=(g_1,g_2)$, $g \in HW^{1,2}_{\textrm{loc}}(\Omega,\Omega')$ is a homeomorphism. Let $S_g = \{(x,y)~\in~\Omega \ | \ x~=~0 \ \textrm{or} \ g_1(x,y)~=~0\}$ denote the \emph{singular set of $g$} and $R_g = \Omega \setminus S_g$, the \emph{regular set of $g$}. We say that $g$ is conformal in $\Omega$, provided that the following two conditions hold:
    \begin{enumerate}
        \item a matrix
        \begin{equation}\label{condition outside singularities}
            D_{\alpha}g = \begin{bmatrix}
X(g_1) & Y_{\alpha}(g_1) \\
\frac{X(g_2)}{\lvert g_1\rvert^{\alpha}} &\frac{Y_{\alpha}(g_2)}{\lvert g_1\rvert^{\alpha}} \\
\end{bmatrix}
        \end{equation}
        is equal almost everywhere, in the sense of the 2--Lebesgue measure, to an orthonormal matrix multiplied by some positive function in $R_g$.
    \item For all points $(x_0,y_0) \in S_g$ the limit
    \begin{equation}\label{limit condition}
        \lim_{R_g \ni (x,y) \to (x_0,y_0)} D_{\alpha} g((x,y))
    \end{equation}
    is defined, finite and non-zero.
    \end{enumerate}
\end{definition}

\begin{remark}
    Notice that the condition \eqref{condition outside singularities} is stated for almost all points since as a map in $HW^{1,2}_{\textrm{loc}}(\Omega, \Omega')$ has partial derivatives defined only up to a set of measure zero. However, our main result, Theorem \ref{main theorem} implies that if $g \in HW^{1,\frac{2(\alpha+1)}{\alpha}}_{\textrm{loc}}(\Omega,\Omega')$ is an orientation--preserving homeomorphism and satisfies \eqref{condition outside singularities} a.e. in $R_g$, then $g \in C^{\infty}(R_g) \cap C(\Omega)$. Therefore, the limit \eqref{limit condition} is well--defined.
\end{remark}

One of the most fundamental tools employed in our work is the so--called Meyerson operator, see \cite{Meyerson}, defined as follows
\begin{equation}\label{Meyerson QS definition}
    \varphi_{\alpha}:G^2_{\alpha} \to \mathbb{C}, \ \varphi_{\alpha}(x,y) = \frac{x \lvert x \rvert^{\alpha}}{\alpha+1} + iy.
\end{equation}
It turns out that $\varphi_{\alpha}$ is quasisymmetric. By using the Meyerson operator $\varphi_{\alpha}$, one can introduce the following differential operators on the Grushin plane, see \cite{Ackermann}:
\begin{equation*}
    Wg = (\partial_x - i\lvert 
 x \rvert^{\alpha} \partial_y)(\varphi_{\alpha} \circ g), \quad \overline{W}g = (\partial_x + i \lvert x \rvert^{\alpha} \partial_y)(\varphi_{\alpha} \circ g),
\end{equation*}
where $g=(g_1,g_2):\Omega \subset G^2_{\alpha} \to G^2_{\alpha}$. Note that $Wg$ and $\overline{W}g$ are counterparts of $\frac{\partial g}{\partial z}$ and $\frac{\partial g}{\partial \bar{z}}$ respectively. Upon direct computations, we obtain the following convenient formulas:
\begin{equation}\label{formulas for wirtinger operators}
\begin{aligned}
        &\overline{W}g = \lvert g_1 \rvert^\alpha \frac{\partial g_1}{\partial x} - \lvert x \rvert^{\alpha} \frac{\partial g_2}{\partial y} + i \left(\frac{\partial g_2}{\partial x} + \lvert x g_1 \rvert^{\alpha} \frac{\partial g_1}{\partial y}\right),\\
       &Wg = \lvert g_1 \rvert^\alpha \frac{\partial g_1}{\partial x} + \lvert x \rvert^{\alpha} \frac{\partial g_2}{\partial y} + i \left(\frac{\partial g_2}{\partial x} - \lvert x g_1 \rvert^{\alpha} \frac{\partial g_1}{\partial y}\right).
\end{aligned}
\end{equation}

The following observation provides a characterization of conformal mappings in the Grushin plane in terms of quasisymmetric mappings. Note that conformality in a sense of Definition \ref{definition of conformal} is a slightly stronger condition than that of Definition 12 in \cite{Ackermann}. First, we recall the definition of an orientation preserving mapping in the Sobolev setting.
\begin{definition}\label{orientation preserving, definition}
    Let $\Omega$, $\Omega'$ be open sets in $G^2_{\alpha}$. A mapping $f: \Omega \to \Omega'$, where $f\in HW^{1,1}_{\textrm{loc}}(\Omega,\Omega')$ is called \emph{orientation preserving} if $J_f > 0$ almost everywhere in $\Omega$, where $J_f$ is the determinant of the matrix $D_H g = \begin{bmatrix}
X(g_1) & Y_{\alpha}(g_1) \\
X(g_2) &Y_{\alpha}(g_2) \\
\end{bmatrix}$. 
\end{definition}
\begin{theorem}[Theorem 4 in \cite{Ackermann}]
    Suppose that $\Omega$, $\Omega'$ are domains in $G^2_{\alpha}$ and $g:\Omega \to \Omega'$ is an orientation-preserving homeomorphism. Then $g$ is conformal on $\Omega \cap R_g$ if and only if $g$ is quasisymmetric and $\overline{W}g=0$ on $R_g$.
\end{theorem}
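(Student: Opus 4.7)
The plan is to use the Meyerson operator $\varphi_\alpha$ to transfer the problem from the Grushin plane to the Euclidean complex plane, where holomorphic homeomorphisms are automatically conformal and quasisymmetric. Setting $h := \varphi_\alpha \circ g \circ \varphi_\alpha^{-1}$ on $\varphi_\alpha(\Omega)$, a chain-rule computation (using $\partial_u x = |x|^{-\alpha}$ for the inverse Meyerson map) yields the identity
\begin{equation*}
\partial_{\bar z} h \circ \varphi_\alpha = \tfrac{1}{2}|x|^{-\alpha}\, \overline{W} g \qquad \text{on } R_g,
\end{equation*}
so that $\overline{W}g = 0$ on $R_g$ is equivalent to $h$ being holomorphic on $\varphi_\alpha(R_g)$.

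Next I would establish the direct algebraic equivalence between the matrix part of Definition \ref{definition of conformal} and the vanishing of $\overline{W}g$. Using formulas \eqref{formulas for wirtinger operators}, the real and imaginary parts of $\overline{W}g = 0$ become $|g_1|^\alpha X(g_1) = Y_\alpha(g_2)$ and $X(g_2) = -|g_1|^\alpha Y_\alpha(g_1)$, which are precisely the conditions that force $D_\alpha g$ into the form $\begin{bmatrix} a & -b \\ b & a \end{bmatrix}$ with $a = X(g_1)$ and $b = X(g_2)/|g_1|^\alpha$. A short calculation gives $\det(D_\alpha g) = J_g/|g_1|^\alpha$, so orientation preservation ($J_g > 0$ a.e.) makes the scalar factor $\sqrt{a^2+b^2}$ strictly positive a.e. on $R_g$. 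This already settles the implication ``$\overline{W}g = 0$ on $R_g$ $\Rightarrow$ matrix condition of conformality on $R_g$.''

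It then remains to produce the quasisymmetry in the forward direction. Given conformality on $R_g$, the above algebraic equivalence gives $\overline{W}g = 0$ on $R_g$, so by the first paragraph $h$ is a homeomorphism between planar domains that is holomorphic off $\varphi_\alpha(S_g)$. Since $S_g$ is contained in the union of the vertical line $\{x=0\}$ and the preimage under $g_1$ of that line, $\varphi_\alpha(S_g)$ lies in a countable union of real-analytic arcs, hence has zero analytic capacity; a standard removable-singularity argument upgrades $h$ to a holomorphic homeomorphism on $\varphi_\alpha(\Omega)$, therefore $1$-quasisymmetric, and then $g = \varphi_\alpha^{-1} \circ h \circ \varphi_\alpha$ is quasisymmetric by Meyerson's theorem on $\varphi_\alpha$.

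The converse direction is now immediate: quasisymmetry is inherited from the hypothesis, and $\overline{W}g = 0$ on $R_g$ together with orientation preservation delivers the conformal matrix condition by the algebraic equivalence above. The main obstacle of the argument is the removable-singularity step in the forward direction, where one must rule out genuine singular behaviour of $h$ along $\varphi_\alpha(S_g)$; this is where the continuity of $h$ (a homeomorphism between open sets in $\mathbb{C}$) combined with the one-dimensional, analytic nature of $\varphi_\alpha(S_g)$ becomes crucial.
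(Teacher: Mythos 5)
You are proving a statement that this paper only quotes: it is Theorem 4 of Ackermann's paper and is cited without proof, so there is no internal argument to compare against; judged on its own, your proposal has two genuine gaps. The first is the removable-singularity step. The singular set contains $\{g_1=0\}=g^{-1}(\Omega'\cap\{x=0\})$, i.e.\ the preimage of a line segment under a map that is merely a homeomorphism; this is a topological arc, not a countable union of real-analytic arcs, and topological arcs in the plane can have positive Lebesgue measure (Osgood-type arcs), hence positive analytic capacity. So you cannot conclude that $\varphi_\alpha(S_g)$ is removable for $h$ in this way. Note that the present paper never argues via removability: in Theorem \ref{main theorem}(b) it shows only that $\lvert S_g\rvert=0$, and it does so using the Sobolev hypothesis $HW^{1,\frac{2(\alpha+1)}{\alpha}}_{\textrm{loc}}$, the change-of-variables/Lusin $(N^{-1})$ argument and $J_g>0$ a.e.; where holomorphy of $\widetilde g=\varphi_\alpha\circ g\circ\varphi_\alpha^{-1}$ is needed, it verifies the Cauchy--Riemann equations \emph{distributionally} across the singular set (via the ACL computations, the integrability supplied by Lemma \ref{higher integration}, and Lemma \ref{gray morris}), rather than deducing holomorphy from an a.e.\ pointwise identity off an allegedly removable set. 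Relatedly, your pointwise chain rule plus ``$\partial_{\bar z}h=0$ a.e.'' does not by itself yield holomorphy of $h$: continuity plus a.e.\ Cauchy--Riemann is not enough without a distributional or ACL argument, and the theorem as stated gives you no explicit Sobolev exponent to lean on.

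The second gap is the claim that a holomorphic homeomorphism is $1$-quasisymmetric. A conformal homeomorphism between planar domains is $1$-quasiconformal, but quasisymmetry is a global three-point condition and fails in general (for instance for conformal maps onto domains with outward cusps; for maps of a disk it holds essentially only when the image is a quasidisk). Hence quasisymmetry of $g$ cannot be obtained merely by conjugating a holomorphic homeomorphism by $\varphi_\alpha$; producing the quasisymmetry in the forward direction is exactly the nontrivial content of Ackermann's theorem, and your argument does not supply it. By contrast, the purely algebraic half of your proposal is sound and coincides with the discussion at the end of Section 2 of the paper: the real and imaginary parts of $\overline{W}g=0$ give $\lvert g_1\rvert^{\alpha}X(g_1)=Y_\alpha(g_2)$ and $X(g_2)=-\lvert g_1\rvert^{\alpha}Y_\alpha(g_1)$, and the identity $\lvert Wg\rvert^2-\lvert\overline{W}g\rvert^2=4\lvert g_1\rvert^{2\alpha}\det D_\alpha g$ together with $J_g>0$ a.e.\ turns this into the statement that $D_\alpha g$ is a.e.\ a positive multiple of a rotation on $R_g$.
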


Before we proceed, we rewrite condition \eqref{condition outside singularities} in Definition \ref{definition of conformal} in a more convenient form.

Recall that orthonormal matrices in $\mathbb{R}^2$ are exactly matrices of rotation by angle $\theta$ or matrices of reflection across a line at the angle of $\frac{\theta}{2}$, i.e.:
\begin{equation*}
    \begin{bmatrix}
\cos{\theta} & -\sin{\theta} \\
\sin{\theta} &\cos{\theta} \\
\end{bmatrix} \ \textrm{(rotation)}, \qquad  
\begin{bmatrix}
\cos{\theta} & \sin{\theta} \\
\sin{\theta} &-\cos{\theta} \\
\end{bmatrix} \ \textrm{(reflection)}.
\end{equation*}

Since we prefer conformal mappings to be orientation preserving, we focus our attention on the case when $g$ is orientation preserving in $R_g$. That is the case when the following condition holds a.e. in $R_g$:
\begin{equation}\label{orientation preserving equivalence}
    \textrm{det}D_{\alpha}g = \frac{X(g_1)Y_{\alpha}(g_2)}{\lvert g_1 \rvert^{\alpha}} - \frac{X(g_2)Y_{\alpha}(g_1)}{\lvert g_1 \rvert^{\alpha}} = \frac{X(g_1)Y_{\alpha}(g_2) - X(g_2)Y_{\alpha}(g_1)}{\lvert g_1 \rvert^{\alpha}} = \frac{J_g}{\lvert g_1 \rvert^{\alpha}} > 0.
\end{equation}
Observe that the above condition rules out the reflections across the line.
Therefore, we are left with the case of $D_{\alpha}g$ being the rotation matrix multiplied by a positive function on $R_g$. Thus, condition \eqref{condition outside singularities} in Definition \ref{definition of conformal} is equivalent to the following
\begin{enumerate}[a]
        \item $\lvert g_1\rvert^{\alpha}X(g_1) = Y_{\alpha}(g_2)$ and $\lvert g_1\rvert^{\alpha} Y_{\alpha}(g_1) = -X(g_2)$ almost everywhere in $R_g$,
        \item $D_{\alpha} g$ is non-singular almost everywhere in $R_g$, i.e. $\textrm{rank}{(D_{\alpha}g)} =2$ a.e. in $R_g$.
\end{enumerate}
Moreover, further investigation of equations \eqref{formulas for wirtinger operators} leads us to the following identity in $R_g$:
\begin{equation}\label{equality of conditions formula}
    \lvert Wg \rvert^2 - \lvert \overline{W}g \rvert^2 = 4\lvert g_1 \rvert^{2\alpha}\textrm{det}D_{\alpha}g.
\end{equation}
Thus, \eqref{condition outside singularities} in Definition \ref{definition of conformal} is equivalent to
\begin{equation}
    \overline{W}g=0, \qquad Wg \neq 0 \ \ \textrm{a.e. in } R_g,
\end{equation}
provided that $g$ is orientation preserving. Indeed, if $\overline{W}g=0$ and $Wg \neq 0$ a.e. in $R_g$, then by \eqref{equality of conditions formula} we obtain that $D_{\alpha}g$ is non--singular a.e. in $R_g$. Conversely, condition (a) implies that $\overline{W}g=0$ a.e. in $R_g$ and since $D_{\alpha}g$ is non--singular, we have by \eqref{equality of conditions formula} that $Wg \neq 0$ a.e. in $R_g$.

\section{Conformal mappings on the Grushin plane}
In this section we will use the following convention notations
\begin{enumerate}
    \item by a measure we mean the Lebesgue measure on $\mathbb{R}^2$ (unless stated otherwise),
    \item a conformal map $g$ between open sets in $G^2_{\alpha}$ is understood in the sense of Definition \ref{definition of conformal}, 
    \item a conformal map $g$ between open sets in $\mathbb{C}$ is understood in the sense of biholomorphism, i.e. $g$ is analytic and is a homeomorphism. In particular $\frac{\partial g}{\partial z} \neq 0$ in the domain of definition of $g$.
\end{enumerate}

Before we proceed recall the following observation.
\begin{lemma}[Theorem 9 in \cite{When is a Function that Satisfies the Cauchy-Riemann Equations Analytic?}]\label{gray morris}
    Let $f=u+iv$ be a locally integrable complex function on an open set $\Omega$. If $f$ satisfies Cauchy--Riemann equations in $\Omega$ in distributional sense, then $f$ has an analytic representative, meaning that $f$ agrees with an analytic function almost everywhere in $\Omega$.
\end{lemma}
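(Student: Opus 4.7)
The plan is to reduce from the distributional setting to the classical one by mollification, use that smooth solutions of the Cauchy--Riemann equations are holomorphic, and then transfer this back to $f$ itself by a compactness argument for holomorphic functions. First I would fix an arbitrary open subset $U \subset \subset \Omega$ and pick a standard smooth nonnegative mollifier $\eta_{\epsilon}$ supported in the Euclidean ball $B(0,\epsilon)$, with $\epsilon < \mathrm{dist}(U,\partial \Omega)$. Then $f_{\epsilon}:= f*\eta_{\epsilon}$ is smooth on $U$, and because convolution with $\eta_{\epsilon}$ commutes with distributional differentiation, $\tfrac{1}{2}(\partial_x + i\partial_y) f_{\epsilon} = \left(\tfrac{1}{2}(\partial_x+i\partial_y)f\right)*\eta_{\epsilon} = 0$ on $U$ in the classical sense. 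Since $f_{\epsilon}$ is $C^{\infty}$ and satisfies the classical Cauchy--Riemann equations on $U$, it is holomorphic there (one can cite the Goursat argument or a direct power-series construction on disks).

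Next I would let $\epsilon \to 0$. By the standard theory of mollifiers for locally integrable functions, $f_{\epsilon}\to f$ in $L^1_{\mathrm{loc}}(\Omega)$. To upgrade this to something useful I would use the mean value property: for $z\in K \Subset U$ and any $r>0$ with $\overline{B(z,r)} \subset U$, each $f_{\epsilon}$ is holomorphic, hence
\begin{equation*}
|f_{\epsilon_1}(z)-f_{\epsilon_2}(z)| \;=\; \left|\frac{1}{\pi r^2}\int_{B(z,r)}\bigl(f_{\epsilon_1}(w)-f_{\epsilon_2}(w)\bigr)\,dA(w)\right| \;\le\; \frac{1}{\pi r^2}\|f_{\epsilon_1}-f_{\epsilon_2}\|_{L^1(K')},
\end{equation*}
where $K'$ is a slightly enlarged compact set containing all relevant balls. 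Together with the $L^1_{\mathrm{loc}}$ Cauchy property of $(f_{\epsilon})$, this shows $(f_{\epsilon})$ is Cauchy in $L^{\infty}(K)$, so it converges uniformly on compact subsets of $U$ to some continuous function $\widetilde f$. As a local uniform limit of holomorphic functions, $\widetilde f$ is holomorphic on $U$. On the other hand, the uniqueness of $L^1_{\mathrm{loc}}$ limits forces $\widetilde f = f$ almost everywhere on $U$. Finally, since $U$ was arbitrary and any two holomorphic functions that agree a.e. on a connected open set are identical, one can patch the local analytic representatives into a single analytic function on all of $\Omega$ that coincides with $f$ almost everywhere.

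The step I expect to be the main obstacle is the passage from $L^1_{\mathrm{loc}}$ convergence of the family $(f_{\epsilon})$ to uniform convergence on compact subsets; this is where the whole argument pivots, and it genuinely requires that each $f_{\epsilon}$ be classically holomorphic so that the pointwise mean value formula is available. Verifying commutation of $\bar\partial$ with convolution in the distributional sense, as well as the routine fact that $f*\eta_{\epsilon}\to f$ in $L^1_{\mathrm{loc}}$, are standard and should present no difficulties; likewise, patching local analytic representatives into a global one is immediate from the identity principle.
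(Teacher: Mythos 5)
Your argument is correct: mollify, note $\bar\partial f_{\epsilon}=(\bar\partial f)*\eta_{\epsilon}=0$ classically on the shrunken domain, use the area mean value property to convert $L^1_{\mathrm{loc}}$ convergence into local uniform convergence, and identify the holomorphic limit with $f$ a.e.; this is the standard Weyl-lemma-type proof for the $\overline{\partial}$ operator, and the patching step at the end needs only continuity of the local representatives on overlaps. Note, however, that the paper offers no proof of this lemma at all --- it is imported verbatim as Theorem 9 of Gray--Morris --- so there is no internal argument to compare against; your proposal simply supplies the classical proof of the cited result, and it is sound.
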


We also need some auxiliary results about harmonic functions and solutions to $\overline{W}g=0$.
\begin{lemma}[Estimate on derivatives of harmonic functions]\label{estimates on derivatives of harmonic function}
    Suppose that $u$ is harmonic in an open set $\Omega \subset \mathbb{R}^n$, $r \ge 0$, $z \in \Omega$ and $B(z,r) \subset \Omega$. Then for any multi-index $\alpha$ of order $k \ge 1$
    \begin{equation*}
        \lvert D^{\alpha} u(z) \rvert \le C(n)\left( \frac{2^{n+1}nk}{r} \right)^k \norm{u}_{L^{\infty}(B(z,r))} < k! e^{k}C(n)\left( \frac{2^{n+1}n}{r} \right)^k \norm{u}_{L^{\infty}(B(z,r))}.
    \end{equation*}
\end{lemma}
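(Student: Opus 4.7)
The plan is to follow the classical route used, for instance, in Evans \cite{Evans}, Theorem 7 in Section 2.2.3, carefully tracking the constants. The underlying facts are that every harmonic function satisfies the mean value property and that partial derivatives of a harmonic function are again harmonic. I would proceed by induction on the order $k$ of the multi--index.

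For the base case $k=1$, I would start from the mean value property on a ball $B(z, r/2)$ for $\partial_i u$, which is harmonic. Rewriting the mean value via the divergence theorem gives
\begin{equation*}
\partial_i u(z) = \frac{1}{|B(z,r/2)|}\int_{\partial B(z,r/2)} u\,\nu_i \,dS,
\end{equation*}
where $\nu_i$ is the $i$-th component of the outward unit normal. Estimating the right-hand side by $\|u\|_{L^\infty(B(z,r))}$ times the surface area of $\partial B(z,r/2)$ and dividing by the volume of $B(z,r/2)$ yields a bound of the form $|Du(z)| \le \frac{c(n)}{r}\|u\|_{L^\infty(B(z,r))}$. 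A direct computation with the ratio of surface area to volume shows that the constant $c(n)$ can be taken to be $2^{n+1} n$ (up to a benign dimensional constant absorbed into $C(n)$), which seeds the induction with the shape demanded by the statement.

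For the inductive step, fix a multi--index $\alpha$ of order $k \ge 2$ and write $D^\alpha u = \partial_i(D^{\alpha'} u)$ for some $i$ and some $\alpha'$ with $|\alpha'| = k-1$. Since $D^{\alpha'} u$ is harmonic, the $k=1$ estimate applied on the ball $B(z, r/k)$ gives
\begin{equation*}
|D^\alpha u(z)| \le \frac{2^{n+1} n k}{r} C(n)\,\|D^{\alpha'} u\|_{L^\infty(B(z, r/k))}.
\end{equation*}
For any $w \in B(z, r/k)$ we have $B(w, r(k-1)/k) \subset B(z, r)$, so the inductive hypothesis applied to $D^{\alpha'} u$ at $w$ with radius $r(k-1)/k$ produces
\begin{equation*}
|D^{\alpha'} u(w)| \le C(n)\left(\frac{2^{n+1} n (k-1) \cdot k}{r(k-1)}\right)^{k-1}\|u\|_{L^\infty(B(z,r))} = C(n)\left(\frac{2^{n+1} n k}{r}\right)^{k-1}\|u\|_{L^\infty(B(z,r))}.
\end{equation*}
Multiplying these two estimates and absorbing the extra dimensional factors into $C(n)$ gives precisely the desired bound $|D^\alpha u(z)| \le C(n)\bigl(\tfrac{2^{n+1} n k}{r}\bigr)^k \|u\|_{L^\infty(B(z,r))}$.

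The second inequality is purely analytical: expanding the exponential series yields
\begin{equation*}
e^k = \sum_{j=0}^{\infty} \frac{k^j}{j!} \ge \frac{k^k}{k!}, \qquad \text{hence} \qquad k^k \le k!\,e^k,
\end{equation*}
and substituting this into the first estimate gives the stated bound. The only genuine technical point is the bookkeeping of constants in the inductive step, which must reproduce exactly the factor $2^{n+1} n k$ promised by the statement; the choice of the auxiliary radius $r/k$ in the base-case application is what makes this work cleanly.
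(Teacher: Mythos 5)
Your proof is correct and follows essentially the same route as the paper: the paper simply cites Chapter 2 of Evans for the first inequality, and the classical Evans induction (mean value property plus the nested-balls argument with radius $r/k$) is exactly what you reproduce, with the constants working out since the base case needs no extra dimensional factor beyond $2n \le 2^{n+1}n$. For the second inequality the paper invokes Stirling's formula, while you use the equivalent elementary bound $k^k < k!\,e^k$ from the exponential series; both are immediate.
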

\begin{proof}
    First part of the inequality can be found in Chapter 2 in \cite{Evans}. The second part follows immediately from the Stirling formula.
\end{proof}
\begin{lemma}\label{wazny lemat}
    Let $u:\Omega \to \Omega'$, $\Omega, \Omega' \subset \mathbb{C}$ be open sets and $u = u_1 + i u_2$  be a conformal mapping such that $u_1(0,y)=0$ for all $(0,y) \in \Omega$. Then 
\begin{equation*}
    \lim_{(x,y) \to (0,y_0)}\frac{u_1(x,y)}{ x } = \frac{\partial u_1}{\partial x}(0,y_0) \neq 0 \textrm{ for all } (0,y_0) \in \Omega.
\end{equation*}
\end{lemma}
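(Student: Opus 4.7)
The map $u$ is a biholomorphism, hence analytic with $u'(z)\neq 0$ throughout $\Omega$. I would first recast the hypothesis $u_1(0,y)=0$ as the geometric statement that $u(iy)\in i\mathbb{R}$ for every $(0,y)\in\Omega$; that is, $u$ carries the vertical segment $\Omega\cap\{x=0\}$ into the imaginary axis. This reformulation is the key to both the existence of the limit and its non-vanishing.

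For the limit identification, I would use the first-order Taylor expansion of $u$ around $z_0=iy_0$. Since $u$ is holomorphic on a small disk $B(z_0,r)\subset\Omega$, we may write
\[
u(x+iy)=u(iy)+x\,u'(iy)+\rho(x,y),\qquad \rho(x,y)=O(x^2),
\]
uniformly in a smaller neighbourhood of $z_0$. Taking real parts and using $u_1(0,y)=0$, this yields
\[
\frac{u_1(x,y)}{x}=\operatorname{Re} u'(iy)+\frac{\operatorname{Re}\rho(x,y)}{x},
\]
and letting $(x,y)\to(0,y_0)$ the first term tends to $\operatorname{Re} u'(iy_0)=\tfrac{\partial u_1}{\partial x}(0,y_0)$ by continuity, while the second tends to $0$ thanks to the $O(x^2)$ estimate.

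It remains to show non-vanishing of this limit, and here the slickest route is to differentiate the identity $u(iy)=iu_2(0,y)$ with respect to $y$: we obtain $i\,u'(iy)=i\,\tfrac{\partial u_2}{\partial y}(0,y)$, so $u'(iy_0)$ is real. Combined with $u'(iy_0)\neq 0$ (conformality), this gives $\operatorname{Re} u'(iy_0)=u'(iy_0)\neq 0$. An equivalent elementary argument: differentiate $u_1(0,y)=0$ in $y$ to get $\tfrac{\partial u_1}{\partial y}(0,y_0)=0$, invoke the Cauchy--Riemann equation $\tfrac{\partial u_2}{\partial x}=-\tfrac{\partial u_1}{\partial y}$ to conclude $\tfrac{\partial u_2}{\partial x}(0,y_0)=0$, and observe that if $\tfrac{\partial u_1}{\partial x}(0,y_0)$ also vanished then $u'(iy_0)$ would vanish, contradicting biholomorphicity. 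There is no substantial obstacle in the argument; the only mild point of care is that the limit is joint in $(x,y)$, which is handled cleanly by the uniform remainder estimate coming from analyticity.
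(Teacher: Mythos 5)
Your argument is correct, and it takes a genuinely different (and lighter) route than the paper. The paper expands the harmonic function $u_1$ in its full two-variable Taylor series at the fixed point $(0,y_0)$, discards the pure $(y-y_0)^n$ terms using the hypothesis, and then controls the infinite tail after dividing by $x$ via derivative estimates for harmonic functions (Cauchy-type bounds plus Stirling's formula) — a fairly heavy double-sum estimate. You instead expand the holomorphic map $u$ to first order about the \emph{moving} base point $iy$ on the axis, so that $z-(iy)=x$ and the remainder is $O(x^2)$ uniformly near $z_0$ (justified, e.g., by the integral form of the Taylor remainder together with boundedness of $u''$ on a compact neighbourhood, noting that the horizontal segment from $iy$ to $x+iy$ stays in $\Omega$); taking real parts and using $\operatorname{Re}u(iy)=u_1(0,y)=0$ then reduces the whole limit to continuity of $u'$, with no series manipulation at all. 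For the non-vanishing of the limit your second, elementary argument ($\partial u_1/\partial y(0,y_0)=0$ along the axis, Cauchy--Riemann, and $u'(iy_0)\neq 0$ from conformality) is exactly the paper's opening step, while your first variant — differentiating $u(iy)=iu_2(0,y)$ in $y$ to see that $u'(iy_0)$ is real and hence equals its real part — is a pleasant sharpening: it shows $\lim u_1/x = u'(iy_0)$ itself, not merely its real part. What your approach buys is brevity and the avoidance of the harmonic-derivative machinery (Lemma \ref{estimates on derivatives of harmonic function}); what the paper's buys is a self-contained real-analytic computation that does not require repositioning the expansion point.
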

\begin{proof}[Proof]
     Since $u$ is conformal we have
    \begin{equation*}
        0 \neq \frac{\partial u}{\partial z}=\frac{\partial u_1}{\partial x} + \frac{\partial u_2}{\partial y} + i\left(\frac{\partial u_2}{\partial x} - \frac{\partial u_1}{\partial y}\right) = 2\frac{\partial u_1}{\partial x} - 2i\frac{\partial u_1}{\partial y},
    \end{equation*}
    and thus, since $\frac{\partial u_1}{\partial y}(0,y_0)=0$ for all $(0,y_0) \in \Omega$ we have that $\frac{\partial u_1}{\partial x}(0,y_0) \neq 0$ for all $(0,y_0) \in \Omega$. Since $u$ is holomorphic, $u_1$ is harmonic and thus real analytic. Expanding $u_1$ into Taylor series in Euclidean ball $B((0,y_0),r) \subset \Omega$, we obtain
    \begin{equation*}
    \begin{aligned}
        u_1(x,y) &= x \frac{\partial u_1}{\partial x}(0,y_0) + \frac{1}{2}\left(x^2 \frac{\partial^2 u_1}{\partial x^2} + 2 x (y-y_0) \frac{\partial^2 u_1}{\partial x \partial y} \right) \\
        &+ \frac{1}{6}\left(x^3\frac{\partial^3 u_1}{\partial x^3} + 3 x^2 (y-y_0) \frac{\partial^3 u_1}{\partial x^2 \partial y} + 3 x (y-y_0)^2 \frac{\partial^3 u_1}{\partial x \partial y^2} \right) + \ldots\\
        &= \sum_{n=1}^{\infty}\left[\frac{1}{n!}\sum_{j=0}^{n-1}{n \choose j} x^{n-j}(y-y_0)^j\frac{\partial^n u}{\partial x^{n-j} \partial y^j}(0,y_0)\right] .
    \end{aligned}
    \end{equation*}
    Therefore,
    \begin{equation}\label{equality that gives lemma}
    \begin{aligned}
        \frac{u_1(x,y)}{x} &= \sum_{n=1}^{\infty}\left[\frac{1}{n!}\sum_{j=0}^{n-1}{n \choose j} x^{n-j-1}(y-y_0)^j\frac{\partial^n u}{\partial x^{n-j} \partial y^j}(0,y_0)\right] \\
        &= \frac{\partial u_1}{\partial x}(0,y_0) + \sum_{n=2}^{\infty}\left[\frac{1}{n!}\sum_{j=0}^{n-1}{n \choose j} x^{n-j-1}(y-y_0)^j\frac{\partial^n u}{\partial x^{n-j} \partial y^j}(0,y_0)\right].
    \end{aligned}
    \end{equation}
Denote by $M:=\norm{u}_{L^{\infty}(B((0,y_0),r))}$, $\rho := \max{\{\lvert x \rvert,\lvert y-y_0\rvert\}}$, $\widetilde{\rho} := \rho^{\frac{n-1}{n}}$ and observe that $\widetilde{\rho} \to 0$ if and only if $(x,y) \to (0,y_0)$. Assume $\widetilde{\rho}<\frac{r}{32e}$. By virtue of Lemma \ref{estimates on derivatives of harmonic function} and the binomial theorem we get the following estimate
\begin{equation*}\label{equality that gives lemma 2}
\begin{aligned}
    &\left\lvert \sum_{n=2}^{\infty}\left[\frac{1}{n!}\sum_{j=0}^{n-1}{n \choose j} x^{n-j-1}(y-y_0)^j\frac{\partial^n u}{\partial x^{n-j} \partial y^j}(0,y_0)\right] \right\rvert \\
    &\le \sum_{n=2}^{\infty}\left[\frac{1}{n!}\sum_{j=0}^{n-1}{n \choose j}  \left\lvert x^{n-j-1}(y-y_0)^j\frac{\partial^n u}{\partial x^{n-j} \partial y^j}(0,y_0) \right\rvert \right] \\
    &\le \sum_{n=2}^{\infty} \rho^{n-1}n!e^{n}C(2)\left(\frac{16}{r}\right)^n M\left[\frac{1}{n!}\sum_{j=0}^{n-1}{n \choose j} \right] \\
    &=\sum_{n=2}^{\infty} \rho^{n-1}C(2)M\left(\frac{16e}{r}\right)^n(2^n-1) \le C(2)M\sum_{n=2}^{\infty}\left(\frac{32e\widetilde{\rho}}{r}\right)^n =C(2)M \frac{(\frac{32e\widetilde{\rho}}{r})^2}{1-\frac{32e\widetilde{\rho}}{r}} \to 0, 
\end{aligned}
\end{equation*}
as $\widetilde{\rho} \to 0 \Leftrightarrow (x,y) \to (0,y_0)$. This together with equality \eqref{equality that gives lemma} imply that
\begin{equation*}
    \frac{u_1(x,y)}{x} \to \frac{\partial u_1}{\partial x}(0,y_0), \quad \textrm{as $(x,y) \to (0,y_0)$},
\end{equation*}
and the assertion of the lemma is proven.
\end{proof}
\begin{lemma}\label{higher integration}
    Let $\Omega$ be an open set in $G^2_{\alpha}$ and function $g \in HW^{1,1}_{\textrm{loc}}(\Omega)$ satisfy almost everywhere in $\Omega$ the following equation
    \begin{equation}\label{satisfy equation}
        \lvert x g \rvert^{\alpha}\frac{\partial g}{\partial y} = f, \textrm{ where }f \in L^1_{\textrm{loc}}(\Omega).
    \end{equation}
     Then $\lvert x \rvert^{\alpha} \lvert g \rvert^{\alpha+1} \in L^1_{\textrm{loc}}(\Omega)$. We emphasize that the above derivative $\frac{\partial g}{\partial y}$ is the usual partial derivative defined almost everywhere, and not the weak derivative of $g$.
\end{lemma}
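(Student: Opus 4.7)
The plan is to reduce the integrability question to an arbitrary closed rectangle $R = [a,b] \times [c,d] \subset \subset \Omega$, and then combine two ingredients: the ACL characterization of $W^{1,1}_{\textrm{loc}}$-functions (applicable since $HW^{1,1}_{\textrm{loc}} \subset W^{1,1}_{\textrm{loc}}$, as noted in the preliminaries) and a fundamental theorem of calculus argument for the nonlinearity $t \mapsto |t|^{\alpha} t$. By ACL, for a.e.\ $y_1 \in [c,d]$ the horizontal trace $x \mapsto g(x, y_1)$ is absolutely continuous on $[a,b]$ and thus bounded there by some constant $M = M(y_1) < \infty$ (via the one-dimensional embedding $W^{1,1}([a,b]) \hookrightarrow L^{\infty}([a,b])$); simultaneously, for a.e.\ $x \in [a,b]$ the vertical trace $y \mapsto g(x, y)$ is absolutely continuous on $[c,d]$. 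I fix any $y_1$ enjoying the first property as the reference line.

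Since $\alpha > 0$, the function $F(t) := |t|^{\alpha} t / (\alpha+1)$ is of class $C^1(\mathbb{R})$ with $F'(t) = |t|^{\alpha}$, so composing it with the absolutely continuous function $y \mapsto g(x,y)$ yields a function of $y$ which is absolutely continuous on $[c,d]$ with a.e.\ pointwise derivative $|g(x,y)|^{\alpha}\, \partial_y g(x,y)$. The fundamental theorem of calculus then gives, for a.e.\ $x \in [a,b]$ and every $y \in [c,d]$,
\begin{equation*}
    \frac{|g(x,y)|^{\alpha} g(x,y) - |g(x,y_1)|^{\alpha} g(x,y_1)}{\alpha+1} = \int_{y_1}^{y} |g(x,s)|^{\alpha}\,\partial_y g(x,s)\,ds.
\end{equation*}
Taking absolute values, multiplying through by $|x|^{\alpha}$, and invoking the hypothesis $|x|^{\alpha}|g(x,s)|^{\alpha}|\partial_y g(x,s)| = |f(x,s)|$ (valid a.e.\ in $\Omega$) to absorb the $|x|^{\alpha}$ factor into the integrand yields the pointwise bound
\begin{equation*}
    |x|^{\alpha}|g(x,y)|^{\alpha+1} \le |x|^{\alpha} M^{\alpha+1} + (\alpha+1)\int_c^d |f(x,s)|\,ds.
\end{equation*}
Integrating over $R$ bounds $\int_R |x|^{\alpha}|g|^{\alpha+1}\,dxdy$ by a sum of $(d-c) M^{\alpha+1}\!\int_a^b |x|^{\alpha}\,dx$ (finite since $|x|^{\alpha}$ is locally integrable, even across the singular line $\{x=0\}$) and $(d-c)(\alpha+1)\int_R |f|\,dxdy$ (finite since $f \in L^1_{\textrm{loc}}$). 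This yields the claim.

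The technical heart of the argument is the choice of the horizontal reference line: for a generic $y_1$ one has no a priori control on $\int_a^b |g(x, y_1)|^{\alpha+1}\,dx$, and it is precisely ACL together with the one-dimensional Sobolev embedding into $L^{\infty}$ that upgrades $g(\cdot, y_1)$ to a bounded function on $[a,b]$ for almost every $y_1$. I do not anticipate extra difficulty at the singular line: the equation forces $f = 0$ a.e.\ on $\{x=0\}$, and the weight $|x|^{\alpha}$ absorbs the potential blow-up of $|g|^{\alpha+1}$ as $x \to 0$.
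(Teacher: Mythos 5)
Your argument is correct, and it diverges from the paper's proof at the decisive step. Both proofs share the same core ingredients: the inclusion $HW^{1,1}_{\textrm{loc}}\subset W^{1,1}_{\textrm{loc}}$ plus the ACL characterization to get absolute continuity of $g$ along almost every vertical segment, and the chain rule for the $C^1$ nonlinearity $t\mapsto \lvert t\rvert^{\alpha}t/(\alpha+1)$, which converts the hypothesis into the statement that $\frac{\partial}{\partial y}\bigl(\lvert x\rvert^{\alpha}\lvert g\rvert^{\alpha}g\bigr)=(\alpha+1)f$ along a.e.\ vertical line. The paper then passes to a weak formulation: it integrates by parts against $\varphi\in C^{\infty}_0$ on a.e.\ vertical cut, sums over the rectangle to get $\iint \lvert x\rvert^{\alpha}\lvert g\rvert^{\alpha}g\,\partial_y\varphi = -(\alpha+1)\iint f\varphi$, and extracts local integrability by the special test function $\widetilde{\varphi}=y\varphi$ with $\varphi\equiv 1$ near the point of interest. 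You instead integrate the same identity directly by the fundamental theorem of calculus along vertical segments emanating from a horizontal reference line $\{y=y_1\}$ chosen (for a.e.\ $y_1$, via ACL and the embedding $W^{1,1}([a,b])\hookrightarrow L^{\infty}([a,b])$) so that $\lvert g(\cdot,y_1)\rvert\le M$, which yields the pointwise bound $\lvert x\rvert^{\alpha}\lvert g(x,y)\rvert^{\alpha+1}\le \lvert x\rvert^{\alpha}M^{\alpha+1}+(\alpha+1)\int_c^d\lvert f(x,s)\rvert\,ds$ and hence the claim by a single integration over the rectangle. Your route is more elementary and self-contained: it avoids test functions and the weak-derivative interpretation of $\lvert x\rvert^{\alpha}\lvert g\rvert^{\alpha}g$ altogether, and the bounded reference line supplies exactly the a priori control that in the paper's final test-function step is left implicit (there the left-hand integral with $\widetilde{\varphi}=y\varphi$ already involves the quantity whose integrability is being proved). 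The one point to keep explicit in a final write-up is that you work throughout with a single ACL representative of $g$, so that the value $g(x,y_1)$ appearing in the vertical FTC is the same as the value of the bounded horizontal trace, and that the a.e.\ pointwise derivative of the vertical trace agrees a.e.\ with the partial derivative $\partial g/\partial y$ in the hypothesis; both facts are standard and are also used implicitly in the paper.
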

\begin{proof}    

    Observe that for every rectangle $P = [a,b] \times [c,d] \subset \Omega$, $g$ is an absolutely continuous function on almost every vertical cut of $P$. Indeed, let us consider two cases. First, if $P \cap \{x=0\}=\emptyset$ then this follows from the characterization of Sobolev functions (Theorem 4.21 in \cite{EvansG}).

    If $P \cap \{x=0\} \neq \emptyset$ then this follows from the fact that $g$ is absolutely continuous on almost every vertical cut of rectangle $R \subset P$ such that $R \cap \{x=0\} = \emptyset$ and that countable union of measure--zero sets is a measure--zero set.

It can easily be checked that if $g$ is absolutely continuous on some cut, then the same holds for $\lvert x \rvert^{\alpha} \lvert g \rvert^{\alpha} g$. Therefore, $\lvert x \rvert^{\alpha} \lvert g \rvert^{\alpha} g$ is weakly differentiable on the before--mentioned cut and the corresponding weak derivative is integrable on the cut.

    Fix rectangle $P = [a,b] \times [c,d] \subset \Omega$. By differentiating with respect to $y$ on almost every vertical cut of $P$ we obtain at points where $g \neq 0$ the following equation
    \begin{equation*}
        \frac{\partial}{\partial y}  \left(\lvert x \rvert^{\alpha} \lvert g \rvert^{\alpha} g \right)= \alpha \lvert x \rvert^{\alpha} \lvert g \rvert^{\alpha-1} \frac{g}{\lvert g \rvert} \frac{\partial g}{\partial y}g + \lvert x \rvert^{\alpha} \lvert g \rvert^{\alpha}\frac{\partial g}{\partial y} = (\alpha+1)\lvert x g \rvert^{\alpha} \frac{\partial g}{\partial y}.
    \end{equation*}
The weak derivative is equal to $0$ almost everywhere on the set $\{g=0\}$ by standard Sobolev theory. Hence, for almost every vertical cut of $P$ and every $\varphi \in C_0^{\infty}(P)$ we obtain
\begin{equation*}
    \begin{aligned}
        \int_c^d \lvert x \rvert^{\alpha} \lvert g \rvert^{\alpha} g \frac{\partial \varphi}{\partial y} \ dy = -(\alpha+1)\int_c^d\lvert x g \rvert^{\alpha} \frac{\partial g}{\partial y} \varphi \ dy.
    \end{aligned}
\end{equation*}
Hence, by \eqref{satisfy equation} we get
    \begin{equation*}
    \begin{aligned}
        \int_a^b\int_c^d \lvert x \rvert^{\alpha} \lvert g \rvert^{\alpha} g \frac{\partial \varphi}{\partial y} \ dydx &= -(\alpha+1)\int_a^b\int_c^d\lvert x g \rvert^{\alpha} \frac{\partial g}{\partial y} \varphi \ dydx \\
        &= -(\alpha+1)\iint_P f \varphi \ dxdy < \infty.
    \end{aligned}
\end{equation*}
Finally, we obtain the assertion of the lemma by taking the following test function $\widetilde{\varphi} := y \varphi$, where $\varphi \in C^{\infty}_0(\Omega)$ is such that $\varphi \equiv 1$ on some open set $U \subset \Omega$ containing $(x_0,y_0)$.
\end{proof}
Before we state and prove the main theorem let us make the following observation.
\begin{remark}\label{zero sets to zero sets}
    The Meyerson quasisymmetry $\varphi_{\alpha}$ and its inverse $\varphi_{\alpha}^{-1}$ satisfy the Lusin (N) condition, i.e. they map the Lebesgue measure--zero sets to the Lebesgue measure--zero sets. Indeed, as $\varphi_{\alpha}$ is smooth, it is in particular locally Lipschitz and hence, $\varphi_{\alpha}$ satisfies Lusin (N) condition. The case of $\varphi_{\alpha}^{-1}$ is slightly more complicated. Let $A \subset \mathbb{C}$ be of measure zero. We have
    \begin{equation*}
        \varphi_{\alpha}^{-1}(A) = \varphi_{\alpha}^{-1}(A \cap \{x=0\}) \cup \varphi_{\alpha}^{-1}(A \cap \{x \neq 0\})
    \end{equation*}
    which is Lebesgue measure zero since $\varphi_{\alpha}^{-1}$ maps singular line $\{x=0\}$ on itself and is locally Lipschitz on $\mathbb{C} \setminus \{x=0\}$, cf. formula \eqref{Meyerson QS definition}.

    Another fact, to which we will appeal in the proof of the main theorem, is that $\varphi_{\alpha}$ and $\varphi_{\alpha}^{-1}$ map Lebesgue measurable sets on Lebesgue measurable sets, which also follows from the before--mentioned properties in this remark. That, together with continuity of $\varphi_{\alpha}$ and its inverse, implies that for every measurable mapping $g$, also the following maps
    \begin{equation*}
        \varphi_{\alpha} \circ g, \quad \varphi_{\alpha}^{-1} \circ g, \quad 
         g \circ \varphi_{\alpha}, \quad g \circ \varphi_{\alpha}^{-1}
    \end{equation*}
    are measurable.
\end{remark}

Now we are ready to prove the main result of this work.

\begin{proof}[Proof of Theorem \ref{main theorem}]
We will first show \textbf{assertion (b)}. Let us assume that $g$ is orientation preserving in the sense of Definition \ref{orientation preserving, definition}. Observe that
\begin{equation*}
    \{g^{-1}(0,y) \ | \ (0,y) \in \Omega'\} = \{g^{-1}(0,y) \ | \ (0,y) \in g(\Omega \setminus \{x=0\})\} \cup \{g^{-1}(0,y) \ | \ (0,y) \in g(\{x=0\} \cap \Omega)\}
\end{equation*}
and that $\{g^{-1}(0,y) \ | \ (0,y) \in g(\{x=0\} \cap \Omega)\} \subset \{x=0\}$. Therefore, it is sufficient to show that
\begin{equation*}
    \lvert \{g^{-1}(0,y) \ | \ (0,y) \in g(\Omega \setminus \{x=0\})\} \rvert =0.
\end{equation*}
Fix any open set $U \subset \subset \Omega \setminus \{x=0\}$. By following the reasoning in Step 4 of the proof of Theorem 4.13 in \cite{mappingsoffinitedistortionKoskela} (page 77) one shows that $g|_{U}$ satisfies the Lusin ($N^{-1}$) condition. Indeed, as in \cite{mappingsoffinitedistortionKoskela} let $E \subset U$ be such that $\lvert g(E) \rvert = 0$ and find a Borel set $A \subset \mathbb{R}^2$ with the properties that: $\lvert A \rvert=0$ and $g(E) \subset A$. Then $E$ is contained in the measurable set $E' \subset g^{-1}(A)$. By the change of variables formula (Theorem A.35. in \cite{mappingsoffinitedistortionKoskela}) we have 
\begin{equation*}
    \iint_{E'} \frac{J_g(x,y)}{\lvert x \rvert^{\alpha}} \ dxdy = \iint_{\Omega} \chi_A(g(x,y)) \frac{J_g(x,y)}{\lvert x \rvert^{\alpha}} \ dxdy \le \iint_{\mathbb{R}^2} \chi_A(x,y) \ dxdy=0.
\end{equation*}
where $\chi_A$ stands for the characteristic function of $A$. Note that $\frac{J_g(x,y)}{\lvert x \rvert^{\alpha}}$ is the Euclidean Jacobian, cf. Definition \ref{orientation preserving, definition}. Since $J_g > 0$ a.e. in $\Omega$, it follows that $\lvert E \rvert=0$. We conclude that $\lvert \{g^{-1}(0,y) \ | \ (0,y) \in g(U)\} \rvert =0$. Let $\{B\}_{i=0}^{\infty}$ be a family of open balls such that $\bigcup_{i=0}^{\infty}B_i = \Omega \setminus \{x=0\}$ and $B_i \subset \subset \Omega \setminus \{x=0\}$ for all $i=0,1,\ldots$. By the subadditivity of the Lebesgue measure, we obtain 
\begin{equation*}
\begin{aligned}
    \left\lvert \{g^{-1}(0,y) \ | \ (0,y) \in g(\Omega \setminus \{x=0\})\} \right\rvert &= \left\lvert \bigcup_{i=0}^{\infty} \{g^{-1}(0,y) \ | \ (0,y) \in g(B_i)\} \right\rvert\\
    &\le \sum_{i=0}^{\infty} \lvert \{g^{-1}(0,y) \ | \ (0,y) \in g(B_i)\} \rvert = 0.
\end{aligned}
\end{equation*}

Therefore $\lvert S_g \rvert=0$. In particular, if $g$ is conformal and orientation preserving, then $\overline{W}g = 0$ a.e. in $\Omega$.

Next, we show \textbf{assertion (a)}.

    Let $g:\Omega \to \Omega'$ satisfy the assumptions of the theorem and $\overline{W}g =0$ a.e. in $\Omega$.
    
    Define
    \begin{equation*}
        \widetilde{g} = \varphi_{\alpha} \circ g \circ \varphi_{\alpha}^{-1} : \varphi_{\alpha}(\Omega) \to \varphi_{\alpha}(\Omega'),
    \end{equation*}
where $\varphi_{\alpha}$ stands for the Meyerson quasisymmetry, cf. \eqref{Meyerson QS definition}. Hence, $\widetilde{g}_1$ reads:
\begin{equation}\label{formula for g wave}
    \widetilde{g}_1(x+iy) = \frac{g_1 \lvert g_1 \rvert^{\alpha}}{\alpha+1}\left(\sign{(x)}[(\alpha+1)\lvert x \rvert]^{\frac{1}{\alpha+1}},y\right), \quad (x,y) \in \varphi_{\alpha}(\Omega)
\end{equation}
and
\begin{equation}
        \widetilde{g}_2(x+iy) = g_2\left(\sign{(x)}[(\alpha+1)\lvert x \rvert]^{\frac{1}{\alpha+1}},y\right),
\end{equation}
where $\widetilde{g} = \widetilde{g_1} + i\widetilde{g_2}$. Denote $\widetilde{x} = \sign{(x)}[(\alpha+1)\lvert x \rvert]^{\frac{1}{\alpha+1}}$. We show that $\widetilde{g} \in L^1_{\textrm{loc}}(\varphi(\Omega)),\varphi(\Omega'))$. By $\overline{W}g=0$ a.e. in $\Omega$ and $g \in HW^{1,\frac{2(\alpha+1)}{\alpha}}(\Omega,\Omega')$ we have that $-\frac{\partial g_2}{\partial x} = \lvert x g_1 \rvert^{\alpha} \frac{\partial g_1}{\partial y} \in L^1_{\textrm{loc}}(\Omega)$, since partial derivatives of $g$ and weak partial derivatives of $g$ agree due to $g \in HW^{1,\frac{2(\alpha+1)}{\alpha}}_{\textrm{loc}}(\Omega, \Omega')$. By Lemma \ref{higher integration} we obtain $\lvert x \rvert^{\alpha} \lvert g_1\rvert^{\alpha+1}\in L^{1}_{\textrm{loc}}(\Omega)$. Fix $V \subset \subset \varphi_{\alpha}(\Omega)$ and let $V_+ = V \cap \{x>0\}$, $V_- = V \cap \{x<0\}$. By the change of variables formula applied on $V_+$ we get that
\begin{equation*}
\begin{aligned}
    \iint_{V_+}(\left\lvert \widetilde{g}_1 \right\rvert + \left\lvert \widetilde{g}_2\right\rvert) \ dxdy &= 
    \iint_{V_+} \left\lvert \frac{g_1^{\alpha+1}(\widetilde{x},y)}{\alpha+1} \right\rvert + \lvert g_2(\widetilde{x},y) \rvert \ dxdy\\
    &= \iint_{\varphi_{\alpha}^{-1}(V_+)} \lvert x \rvert^{\alpha}\left(\left\lvert \frac{g_1^{\alpha+1}(x,y)}{\alpha+1} \right\rvert + \lvert g_2(x,y) \rvert\right) \ dxdy<+\infty,
\end{aligned}
\end{equation*}
and thus $\widetilde{g} \in L^1_{\textrm{loc}}(\varphi_{\alpha}(\Omega))$. Similar estimate holds on $V_{-}$. We note that, by Remark \ref{zero sets to zero sets}, map $\widetilde{g}$ is measurable.

We observe that $g \circ \varphi_{\alpha}^{-1} \in HW^{1,1}_{\textrm{loc}}(\varphi_{\alpha}(\Omega),\Omega')$. Indeed, for all $U \subset \subset \varphi_{\alpha}(\Omega)$ we have
\begin{equation*}
    \iint_U \lvert g(\widetilde{x},y) \rvert \ dxdy = \iint_{\varphi_{\alpha}^{-1}(U)} \lvert x \rvert^{\alpha} \lvert g(x,y) \rvert \ dxdy < +\infty
\end{equation*}
and for all $\psi \in C^{\infty}_0(U)$
\begin{equation*}
\begin{aligned}
    \iint_U  g_1(\widetilde{x},y) \frac{\partial \psi}{\partial x}(x,y) \ dxdy &= \iint_{\varphi^{-1}_{\alpha}(U)} \lvert x \rvert^{\alpha} g_1(x,y) \frac{\partial \psi}{\partial x} \left(\frac{x\lvert x\rvert^{\alpha}}{\alpha+1},y\right) \ dxdy\\
    &= \iint_{\varphi^{-1}_{\alpha}(U)} g_1(x,y) \frac{\partial}{\partial x} \left[\psi \left(\frac{x\lvert x\rvert^{\alpha}}{\alpha+1},y\right)\right] \ dxdy\\
    &= - \iint_{\varphi^{-1}_{\alpha}(U)} \frac{\partial g_1}{\partial x}(x,y) \psi \left(\frac{x\lvert x\rvert^{\alpha}}{\alpha+1},y\right) \ dxdy\\
    &=- \iint_U \frac{1}{\lvert \widetilde{x} \rvert^{\alpha}}\frac{\partial g_1}{\partial x}(\widetilde{x},y) \psi(x,y) \ dxdy.
\end{aligned}
\end{equation*}
Moreover
\begin{equation*}
    \iint_U \frac{1}{\lvert \widetilde{x} \rvert^{\alpha}} \left\lvert\frac{\partial g_1}{\partial x}(\widetilde{x},y) \right\rvert \ dxdy = \iint_{\varphi_{\alpha}^{-1}(U)}\left\lvert\frac{\partial g_1}{\partial x}(x,y) \right\rvert \ dxdy <+\infty.
\end{equation*}
Therefore, $\frac{\partial}{\partial x}\left[ g_1(\widetilde{x},y)\right] = \frac{1}{\lvert \widetilde{x} \rvert^{\alpha}}\frac{\partial g_1}{\partial x}(\widetilde{x},y)$ in the weak sense.
Moreover, the existence of all other weak partial derivatives of $g \circ \varphi_{\alpha}^{-1}$ can be proven in the same, straightforward, way. In particular, we obtain that $g \circ \varphi_{\alpha}^{-1}$ is ACL, and therefore $\widetilde{g}$ is ACL as well. By direct computations, it holds that in a set $\{(x,y) \in \Omega \ | \ x \neq 0\}$ we have:
\begin{equation*}
\begin{aligned}
    \frac{\partial \widetilde{g}}{\partial x}(x,y) = \frac{\partial}{\partial x} \left(\frac{g_1 \lvert g_1 \rvert^{\alpha+1}(\widetilde{x},y)}{\alpha+1}\right) = \lvert g_1 (\widetilde{x},y) \rvert^{\alpha}\frac{\partial g_1}{\partial x}(\widetilde{x},y) \frac{\partial \widetilde{x}}{\partial x} = \frac{\lvert g_1(\widetilde{x},y) \rvert^{\alpha}}{\lvert \widetilde{x} \rvert^{\alpha}} \frac{\partial g_1}{\partial x}(\widetilde{x},y).
\end{aligned}
\end{equation*}
Similarly we compute formulas for all other first order partial derivatives of $\widetilde{g}$ and obtain
\begin{equation}\label{usual pd of wg}
    \begin{aligned}
        &\frac{\partial \widetilde{g}_1}{\partial x} = \frac{\lvert g_1 \rvert^{\alpha}}{\lvert \widetilde{x} \rvert^{\alpha}} \frac{\partial g_1}{\partial x}(\widetilde{x},y),\qquad \frac{\partial \widetilde{g}_1}{\partial y}= \lvert g_1 \rvert^{\alpha} \frac{\partial g_1}{\partial y}(\widetilde{x},y),\\
        &\frac{\partial \widetilde{g}_2}{\partial x} = \frac{1}{\lvert \widetilde{x} \rvert^{\alpha}} \frac{\partial g_2}{\partial x}(\widetilde{x},y),\qquad \frac{\partial \widetilde{g}_2}{\partial y}= \frac{\partial g_2}{\partial y}(\widetilde{x},y).
    \end{aligned}
\end{equation}
Thus, we obtain the following:
\begin{equation}\label{this implies bounded gradient}
\begin{aligned}
    \frac{\partial \widetilde{g}}{\partial \bar{z}} &= \frac{\partial \widetilde{g}_1}{\partial x} - \frac{\partial \widetilde{g}_2}{\partial y} + i\left(\frac{\partial \widetilde{g}_2}{\partial x}+\frac{\partial \widetilde{g}_1}{\partial y}\right) = \frac{\lvert g_1 \rvert^{\alpha}}{\lvert \widetilde{x} \rvert^{\alpha}} \frac{\partial g_1}{\partial x}(\widetilde{x},y) - \frac{\partial g_2}{\partial y}(\widetilde{x},y)+ \\ &i\left(\frac{1}{\lvert \widetilde{x} \rvert^{\alpha}} \frac{\partial g_2}{\partial x}(\widetilde{x},y)+ \lvert g_1 \rvert^{\alpha} \frac{\partial g_1}{\partial y}(\widetilde{x},y)\right) = \frac{1}{\lvert \widetilde{x} \rvert^{\alpha}} \overline{W} g (\widetilde{x},y) = 0
\end{aligned}
\end{equation}almost everywhere in $\Omega$. Moreover, by using \eqref{this implies bounded gradient} it is easy to check that partial derivatives of $\widetilde{g}$ defined by equations \eqref{usual pd of wg} are of the class $L^1_{\textrm{loc}}(\varphi_{\alpha}(\Omega))$. Therefore, by the ACL characterization of Sobolev spaces, the partial derivatives \eqref{usual pd of wg} are also weak derivatives of $\widetilde{g}$. From Lemma \ref{gray morris} we may redefine $\widetilde{g}$ on a zero measure set obtaining $\hat{g} = \widetilde{g}$ a.e. so that $\hat{g}$ is holomorphic in $\varphi_{\alpha}(\Omega)$. Therefore, since $\varphi_{\alpha}$ is smooth and $\varphi_{\alpha}^{-1}$ is smooth outside of set $\{(x,y) \in \mathbb{R}^2 \ | \ x=0\}$, the assertion (a) follows.

Let us now proceed to the proofs of \textbf{assertions (c) and (d)}. Suppose that $g$ is homeomorphism and $\overline{W}g=0$ a.e. in $\Omega$. This, by \eqref{this implies bounded gradient} implies that $\widetilde{g}$ is conformal as a complex function.

We will show that $S_g=\{x=0\}$ using \textbf{only} assumptions that $g\in HW^{1,\frac{2(\alpha+1)}{\alpha}}(\Omega,\Omega')$, $g$ being a homeomorphism and $\overline{W}g =0$ a.e. in $\Omega$. Set $\widetilde{x} = \frac{x \lvert x \rvert^{\alpha}}{\alpha+1}$. We solve equations \eqref{usual pd of wg} for the first order partial derivatives of $g$ and obtain
\begin{equation}\label{derivatives of g}
    \begin{aligned}
        &\frac{\partial g_1}{\partial x} = \frac{\lvert x \rvert^{\alpha}}{\lvert g_1 \rvert^{\alpha}}\frac{\partial \widetilde{g}_1}{\partial x}(\widetilde{x},y),\qquad \frac{\partial g_1}{\partial y} = \frac{1}{\lvert g_1 \rvert^{\alpha}}\frac{\partial \widetilde{g}_1}{\partial y}(\widetilde{x},y),\\
        &\frac{\partial g_2}{\partial x} = \lvert x \rvert^{\alpha}\frac{\partial \widetilde{g}_2}{\partial x}(\widetilde{x},y),\qquad \frac{\partial g_2}{\partial y} = \frac{\partial \widetilde{g}_2}{\partial y}(\widetilde{x},y),
    \end{aligned}
\end{equation}
provided that $g_1 \neq 0$. Fix an open ball $B \subset \subset \Omega \setminus \{x=0\}$ and notice that, since $\widetilde{g}_1$ is a harmonic function, by the unique continuation property it cannot vanish on a set of positive measure, unless it is constant. Moreover, $\widetilde{g}_1$ is not constant, since $\widetilde{g}$ is homeomorphism. This observation, together with equality $\overline{W}g=0$ a.e. in $\Omega$, justify the following identity:
\begin{equation}\label{integral identity}
\begin{aligned}
    \iint_B \lvert \nabla_H g_1 \rvert^{\frac{2(\alpha+1)}{\alpha}} \ dxdy =
    \iint_B \frac{\lvert \nabla_H g_2 \rvert^{\frac{2(\alpha+1)}{\alpha}}}{\lvert g_1 \rvert^{2(\alpha+1)}} \ dxdy.
\end{aligned}
\end{equation}
Notice that $g_2 \in C^{\infty}$ by equations \eqref{derivatives of g} and suppose that there is a point $(x_0,y_0) \in B$ such that $g_1(x_0,y_0) = 0$, $\nabla_H g_2(x_0,y_0) \neq 0$ and denote $z_0=\varphi_{\alpha}(x_0,y_0)$. Since $\widetilde{g}_1$ is harmonic we have for a sufficiently small ball $\widetilde{B} \subset B$ centered at $(x_0,y_0)$ the following estimate

\begin{equation*}
\begin{aligned}
    \iint_{\widetilde{B}} \frac{\lvert \nabla_H g_2 \rvert^{\frac{2(\alpha+1)}{\alpha}}}{\lvert g_1 \rvert^{2(\alpha+1)}} \ dxdy &\ge \inf_{\widetilde{B}}\lvert \nabla_H g_2 \rvert^{\frac{2(\alpha+1)}{\alpha}} \iint_{\widetilde{B}} \frac{1}{\lvert g_1 \rvert^{2(\alpha+1)}} \ dxdy\\ 
    &= C(\lvert \nabla_H g_2 \rvert,\widetilde{B}) \iint_{\varphi_{\alpha}(\widetilde{B})}  \frac{\frac{d}{dx}\left[\sign{(x)}[(\alpha+1)\lvert x \rvert]^{\frac{1}{\alpha+1}}\right]}{\lvert g_1 \circ \varphi_{\alpha}^{-1} \rvert^{2(\alpha+1)}} \ dxdy\\
    &\ge C(\lvert \nabla_H g_2 \rvert,\widetilde{B}) \inf_{\widetilde{B}}\left(\frac{1}{\lvert \varphi_{\alpha}^{-1}(x,y)\rvert^{\alpha}} \right)\iint_{\varphi_{\alpha}(\widetilde{B})}  \frac{1}{\lvert g_1 \circ \varphi_{\alpha}^{-1} \rvert^{2(\alpha+1)}} \ dxdy\\ 
    &\ge C(\lvert \nabla_H g_2 \rvert,\varphi_{\alpha}^{-1},\widetilde{B})\iint_{\varphi_{\alpha}(\widetilde{B})} \frac{1}{\lvert \widetilde{g}_1 \rvert^2} \ dxdy= C\iint_{\varphi_{\alpha}(\widetilde{B})} \frac{1}{\lvert \widetilde{g}_1(x+iy) - \widetilde{g}_1(z_0) \rvert^2} \ dxdy\\
    &\ge \frac{C}{\textrm{Lip}_{\widetilde{g}_1}}\iint_{\varphi_{\alpha}(\widetilde{B})} \frac{1}{\lvert x+iy-z_0 \rvert^2} \ dxdy = \infty
\end{aligned}
\end{equation*}
This, by \eqref{integral identity} contradicts the fact that $\nabla_H g_1 \in L^{\frac{2(\alpha+1)}{\alpha}}(\widetilde{B})$, and therefore $\nabla_H g_2(x_0,y_0)=0$ for $x_0 \neq 0$ if $g_1(x_0,y_0)=0$. Notice that, since $\widetilde{g}$ is conformal, we have that at every point $(\widetilde{x},y)$ either $\frac{\partial \widetilde{g}_i}{\partial x}$ or $\frac{\partial \widetilde{g}_i}{\partial y}$ is non-zero for $i=1,2$ and the argument for this fact is the same as in the beginning of the proof of Lemma \ref{wazny lemat}. Thus, by using equations \eqref{derivatives of g}, observation that formulas for $\frac{\partial g_2}{\partial x}$, $\frac{\partial g_2}{\partial y}$ are valid everywhere and the fact that $\nabla_H g_2 =0$ if $g_1=0$ and $x \neq 0$ we obtain that $g_1 \neq 0$ outside of $\{x=0\}$. This shows that $S_g = \{x=0\}$.

We will show the first implication in (c), i.e. if $g$ is conformal and orientation preserving then $\overline{W}g=0$ a.e. in $\Omega$ and $\{g_1 = 0\} = \Omega \cap \{x=0\}$. The first part is obvious by (b). We show that $\{g_1 = 0\} = \Omega \cap \{x=0\}$. We already proven that $\{g_1 = 0\} \subset \Omega \cap \{x=0\}$, i.e. $S_g = \{x=0\}$. We will show the reverse inclusion.  Assume on the contrary that $g_1 \neq 0$ at some point $(0,y_0) \in \Omega$. Then, by continuity of $g_1$, it is true in some neighbourhood of $(0,y_0)$ that $g_1 \neq 0$ and by using equations \eqref{derivatives of g} we deduce that $g \in C^{\infty}$ in that neighbourhood. This contradicts the fact that $g$ is conformal, since then the limit
    \begin{equation}\label{limit if smooth}
    \begin{aligned}
       \lim_{(x,y) \to (0,y_0)} D_{\alpha}g (x,y) = \lim_{(x,y) \to (0,y_0)} \begin{bmatrix}
X(g_1) & Y_{\alpha}(g_1) \\
\frac{X(g_2)}{\lvert g_1\rvert^{\alpha}} &\frac{Y_{\alpha}(g_2)}{\lvert g_1\rvert^{\alpha}} \\
\end{bmatrix} = \lim_{(x,y) \to (0,y_0)} \begin{bmatrix}
\frac{Y_{\alpha}(g_2)}{\lvert g_1\rvert^{\alpha}} & Y_{\alpha}(g_1) \\
-Y_{\alpha}(g_1) &\frac{Y_{\alpha}(g_2)}{\lvert g_1\rvert^{\alpha}}\\
\end{bmatrix}=0.
    \end{aligned}
    \end{equation}
Therefore $\{g_1 = 0\} = \Omega \cap \{x=0\}$.

We now prove that the converse implication in (c) is true. Assume that $\{g_1 = 0\} = \Omega \cap \{x=0\}$ and $\overline{W}g=0$ a.e. in $\Omega$. We have in $R_g = \Omega \setminus \{x=0\}$
\begin{equation}\label{orientation preserving}
    \textrm{det}D_{\alpha}g = \frac{\lvert x \rvert^{\alpha}}{\lvert g_1 \rvert^{\alpha}}\frac{\partial g_1}{\partial x} \frac{\partial g_2}{\partial y} - \frac{\lvert x \rvert^{\alpha}}{\lvert g_1 \rvert^{\alpha}}\frac{\partial g_1}{\partial y}\frac{\partial g_2}{\partial x} = \frac{\lvert x \rvert^{2\alpha}}{\lvert g_1 \rvert^{2\alpha}}J_{\widetilde{g}}(\widetilde{x},y) >0,
\end{equation}
and, by the fact that $\widetilde{g}$ is conformal, $D_{\alpha}g$ is non--singular in $\Omega \setminus \{x=0\}$. Thus, we have shown that $g$ satisfies condition \eqref{condition outside singularities} in Definition \ref{definition of conformal}. Next we show \eqref{limit condition} in that definition. Note that outside the vertical line $\{x=0\}$ it holds that
\begin{equation}\label{alternative condition follows}
    D_{\alpha}g = \begin{bmatrix}
X(g_1) & Y_{\alpha}(g_1) \\
\frac{X(g_2)}{\lvert g_1\rvert^{\alpha}} &\frac{Y_{\alpha}(g_2)}{\lvert g_1\rvert^{\alpha}} \\
\end{bmatrix} = \begin{bmatrix}
\frac{\lvert x \rvert^{\alpha}}{\lvert g_1 \rvert^{\alpha}}\frac{\partial \widetilde{g}_1}{\partial x} & \frac{\lvert x \rvert^{\alpha}}{\lvert g_1 \rvert^{\alpha}}\frac{\partial \widetilde{g}_1}{\partial y} \\
\frac{\lvert x \rvert^{\alpha}}{\lvert g_1 \rvert^{\alpha}}\frac{\partial \widetilde{g}_2}{\partial x} &\frac{\lvert x \rvert^{\alpha}}{\lvert g_1 \rvert^{\alpha}}\frac{\partial \widetilde{g}_2}{\partial y} \\
\end{bmatrix} = 
C(\alpha) \frac{\lvert \widetilde{x} \rvert^{\frac{\alpha}{\alpha+1}}}{\lvert \widetilde{g}_1\rvert^{\frac{\alpha}{\alpha+1}}}\begin{bmatrix}
\frac{\partial \widetilde{g}_1}{\partial x} & \frac{\partial \widetilde{g}_1}{\partial y} \\
\frac{\partial \widetilde{g}_2}{\partial x} &\frac{\partial \widetilde{g}_2}{\partial y} \\
\end{bmatrix}.
\end{equation}
Hence, by continuity of derivatives $\widetilde{g}$ and Lemma \ref{wazny lemat}, we conclude that the limit in \eqref{limit if smooth} exists as we approach via paths contained in $\Omega \setminus \{x=0\}$. Moreover, this limit is non-zero since $\frac{\partial \widetilde{g}_1}{\partial x}(0,y) \neq 0$ for all $(0,y) \in \varphi_{\alpha}(\Omega)$. Therefore, condition \eqref{limit condition} in Definition \ref{definition of conformal} holds and $g$ is conformal. We conclude the proof of assertion (c) by observing that equation \eqref{orientation preserving} together with \eqref{orientation preserving equivalence} imply that $g$ is orientation preserving.

Finally, we show \textbf{assertion (d)}. By assumptions we may write $\Omega \cap \{x=0\} = \bigcup_{i=0}^N I_i$, where $I_i$ are connected components of $\Omega \cap \{x=0\}$. Assume that $g$ is conformal and orientation preserving. By (c) we know that $\{g_1=0\}=\Omega \cap \{x=0\}$. Since each of $I_i$ is closed and connected in the underlying topology we have that $g(I_i) \subset J$ is closed and connected for all $i=0,1,\ldots, N$, where $J$ is one of the connected components of $\Omega' \cap \{x=0\}$, homeomorphic to $(0,1)$. Without loss of generality assume that $J=(0,1)$. If either 
\begin{equation*}
    g(I_i) = (0, a], \qquad g(I_i) = [a,b], \qquad g(I_i) = [a,1),
\end{equation*}
for some $a,b$ then $g(I_i) \setminus \{a\}$ is connected but $g^{-1}(g(I_i) \setminus \{a\})$ is not. Therefore $g(I_i) = J_j$ and this shows the sufficiency part of assertion (d).

Assume that $g$ is a homeomorphism, $\overline{W}g=0$ almost everywhere and that the number of connected components of $\Omega' \cap \{x=0\}$ is the same as the number of connected components of $\Omega \cap \{x=0\}$. This implies that $\{g_1 = 0\} = \Omega \cap \{x=0\}$. Indeed, we know by previous parts of the proof that $S_g = \{x=0\}$, and therefore $\Omega' \cap \{x=0\} \subset g(\Omega \cap \{x=0\})$. Let $\Omega' \cap \{x=0\} = \bigcup_{i=0}^N J_i$ and observe that each of $J_i$ is closed and connected in the underlying topology. Hence, similarly as above, $g^{-1}(J_i) \subset I_i$, where $I_i$ is one of the connected components of $\Omega \cap \{x=0\}$, and $g^{-1}(J_i)$ is closed and connected for all $i=0,1,\ldots, N$. As above, we obtain that in fact $g^{-1}(J_i) = I_i$ for some $i$. This shows that $\{g_1 = 0\} = \Omega \cap \{x=0\}$ and that completes the proof of assertion (d) and the proof of the whole theorem as well.
\end{proof}

\begin{remark}
    Notice that the assumption $\{g_1 = 0\} = \Omega \cap \{x=0\}$ in Theorem \ref{main theorem} is nessesary. Indeed, let $\widetilde{g}$ be conformal mapping on the complex plane such that $\widetilde{g}_1 \neq 0$ at some point $x_0 + iy_0=i y_0$. It holds that
    \begin{equation*}
        \overline{W}g = \overline{W}(\varphi_{\alpha}^{-1} \circ \widetilde{g} \circ \varphi_{\alpha})=0
    \end{equation*}
    in $R_g$. However, by \eqref{limit if smooth} $g$ is not conformal.
\end{remark}

We remark that Theorem \ref{main theorem} and its proof gives us the following observation.
\begin{theorem}\label{important theorem}
    Let $\Omega, \Omega'$ be open in $G^2_{\alpha}$. The family of orientation preserving, conformal mappings $g~=~(g_1,g_2)$, $g \in HW^{1,\frac{2(\alpha+1)}{\alpha}}_{\textrm{loc}}(\Omega,\Omega')$ is equivalent to the family of conformal maps $\widetilde{g}:\varphi_{\alpha}(\Omega) \to \varphi_{\alpha}(\Omega')$ such that $\widetilde{g}_1(x,y) =0$ if and only if $x=0$.

    Moreover, the equivalence is given by the formula $\widetilde{g} = \varphi_{\alpha} \circ g \circ \varphi_{\alpha}^{-1}$ and $g_1(x,y)=0$ if and only if $x=0$.
\end{theorem}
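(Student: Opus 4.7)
My plan is to assemble the equivalence from pieces already present in the proof of Theorem \ref{main theorem}, using the correspondence $g \mapsto \widetilde{g} := \varphi_{\alpha} \circ g \circ \varphi_{\alpha}^{-1}$. For the \emph{forward direction}, starting from an orientation preserving conformal $g$, Theorem \ref{main theorem}(b) gives $\overline{W}g = 0$ a.e.\ in $\Omega$, and the argument from the proof of Theorem \ref{main theorem}(a) produces a holomorphic representative of $\widetilde{g}$ on $\varphi_{\alpha}(\Omega)$. Since $g$ and $\varphi_{\alpha}^{\pm 1}$ are homeomorphisms, $\widetilde{g}$ is continuous and must therefore agree pointwise with its holomorphic representative; as a holomorphic homeomorphism it is a biholomorphism, i.e., conformal in the complex sense. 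Theorem \ref{main theorem}(c) yields $\{g_1 = 0\} = \Omega \cap \{x = 0\}$, and using \eqref{formula for g wave} together with the fact that $\varphi_{\alpha}$ preserves the $y$-axis translates this to $\widetilde{g}_1(x,y) = 0$ if and only if $x = 0$.

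For the \emph{reverse direction}, given $\widetilde{g}: \varphi_{\alpha}(\Omega) \to \varphi_{\alpha}(\Omega')$ conformal with $\widetilde{g}_1 = 0 \iff x = 0$, I define $g := \varphi_{\alpha}^{-1} \circ \widetilde{g} \circ \varphi_{\alpha}$. It is immediate that $g$ is a homeomorphism with $\{g_1 = 0\} = \Omega \cap \{x = 0\}$, and inverting \eqref{usual pd of wg} to obtain \eqref{derivatives of g} and combining with $\partial_{\bar z} \widetilde{g} = 0$ delivers $\overline{W}g = 0$ almost everywhere. Once the Sobolev regularity $g \in HW^{1,2(\alpha+1)/\alpha}_{\textrm{loc}}(\Omega,\Omega')$ is established, Theorem \ref{main theorem}(c) concludes that $g$ is conformal and orientation preserving. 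The two constructions are mutually inverse because $\varphi_{\alpha}$ and $\varphi_{\alpha}^{-1}$ are, which yields the claimed bijection together with the explicit formula.

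The \emph{main obstacle} I expect is the Sobolev regularity of $g$ in the reverse direction. The formulas \eqref{derivatives of g} contain the factor $|x|^{\alpha}/|g_1|^{\alpha}$, which is a priori singular along $\{x = 0\}$. To control it, I will apply Lemma \ref{wazny lemat} to the complex conformal map $\widetilde{g}$, obtaining $\widetilde{g}_1(\widetilde{x},y)/\widetilde{x} \to \partial_x \widetilde{g}_1(0,y_0) \neq 0$ as $(\widetilde{x},y) \to (0,y_0)$. Combined with the identity $|g_1|^{\alpha+1} = (\alpha+1)|\widetilde{g}_1|$ viewed through $\widetilde{x} = \varphi_{\alpha}(x)$, this forces $|g_1(x,y)| \asymp |x|$ in a neighborhood of the $y$-axis, so $|x|^{\alpha}/|g_1|^{\alpha}$ is locally bounded. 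Since $\widetilde{g}$ is smooth on $\varphi_{\alpha}(\Omega)$, the required local $L^{2(\alpha+1)/\alpha}$ integrability of $\nabla_H g$ then follows by direct estimation on compact subsets of $\Omega$.
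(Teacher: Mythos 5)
Your proposal is correct and follows essentially the same route as the paper: the forward direction is read off from parts (a)--(c) of Theorem \ref{main theorem}, and the reverse direction defines $g=\varphi_{\alpha}^{-1}\circ\widetilde{g}\circ\varphi_{\alpha}$, computes $\overline{W}g=0$ from \eqref{derivatives of g}, and controls the singular factor $\lvert x\rvert^{\alpha}/\lvert g_1\rvert^{\alpha}$ via Lemma \ref{wazny lemat} (the paper phrases this as the continuous extension $\mathrm{Ext}$, which is exactly your $\lvert g_1\rvert\asymp\lvert x\rvert$ estimate) before concluding with part (c). No substantive difference from the paper's argument.
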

\begin{proof}[Proof]
The proof goes similar to the one of Theorem \ref{main theorem}, and therefore we present only the sketch of the reasoning.

If $g:\Omega \to \Omega'$ is an orientation preserving conformal mapping then $\widetilde{g}:\varphi_{\alpha}(\Omega) \to \varphi_{\alpha}(\Omega')$ given by the formula $\widetilde{g} = \varphi_{\alpha} \circ g \circ \varphi_{\alpha}^{-1}$ is conformal as well and $\widetilde{g}_1(x,y)=0$ if and only if $x=0$.

For the proof of the opposite implication, fix conformal mapping $\widetilde{g}:\varphi_{\alpha}(\Omega) \to \varphi_{\alpha}(\Omega')$ such that $\widetilde{g}_1(x,y)=0$ if and only if $x=0$.

As in Theorem \ref{main theorem} we define $g = \varphi_{\alpha}^{-1} \circ \widetilde{g} \circ \varphi_{\alpha}$ and by using equations \eqref{derivatives of g} we find that
\begin{equation*}
    (\overline{W}g)(x,y)= \lvert x \rvert^{\alpha} \frac{\partial \widetilde{g}}{\partial \bar{z}}(\widetilde{x},y)=0 \quad \textrm{in } R_g.
\end{equation*}
Thus, $\overline{W}g=0$ in $R_g = \Omega \setminus \{x=0\}$. Using \eqref{formula for g wave} we can easily check that $g_1(x,y)=0$ if and only if $x=0$. What is left to be shown is that $g \in HW^{1,\frac{2(\alpha+1)}{\alpha}}_{\textrm{loc}}(\Omega,\Omega')$. We show that using equations \eqref{derivatives of g} and Lemma \ref{wazny lemat}. The fraction $\frac{\lvert x \rvert^{\alpha}}{\lvert g_1 \rvert^{\alpha}}$ is defined in $R_g = \Omega \setminus \{x=0\}$. We extend it continuously to $\{x=0\}$: 
\begin{equation*}
    \textrm{Ext}(x,y) = \begin{cases}  \frac{\lvert x \rvert^{\alpha}}{\lvert g_1 \rvert^{\alpha}}, \qquad \qquad \quad \ \ \ x \neq 0,\\ 
    \left(\frac{1}{\frac{\partial \widetilde{g}_1}{\partial x}(0,y)}\right)^{\frac{\alpha}{\alpha+1}}, \quad x=0.
    \end{cases}
\end{equation*}
For proof of continuity cf. \eqref{alternative condition follows} and Lemma \ref{wazny lemat}. By using continuity of $\textrm{Ext}:\Omega \to \mathbb{R}$ in \eqref{derivatives of g} we easily get desired result.
\end{proof}

\section{Applications of Theorem \ref{main theorem}}
In this section, we use Theorem \ref{main theorem} and the related observations to derive some further results for conformal mappings on the Grushin plane.

\begin{corollary}[Basic properties of Grushin--conformal mappings]\label{basic properties of conformal}
Let $g~=~(g_1,g_2)$ and $h=(h_1,h_2)$ be orientation preserving conformal mappings between open sets in the Grushin plane such that $g,h \in HW^{1,\frac{2(\alpha+1)}{\alpha}}_{\textrm{loc}}$. Then:
\begin{enumerate}
    \item $g \circ h$ and $h \circ g$ are orientation preserving and conformal,
    \item $g^{-1}$ and $h^{-1}$ are orientation preserving and conformal,
\end{enumerate}
where all the domains and their images are such that the above expressions make sense.

In particular, given a natural dilation on the Grushin plane, $\delta_{\lambda}(x,y) = (\lambda x, \lambda^{\alpha+1}y)$, $\lambda>0$, we have that $g \circ \delta_{\lambda}$ and $\delta_{\lambda} \circ g$ are conformal.
\end{corollary}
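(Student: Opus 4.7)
The plan is to transport each claim through the correspondence established in Theorem~\ref{important theorem}, which pairs orientation preserving Grushin--conformal maps $g$ in $HW^{1,\frac{2(\alpha+1)}{\alpha}}_{\textrm{loc}}$ with classical biholomorphisms $\widetilde{g} = \varphi_{\alpha}\circ g\circ \varphi_{\alpha}^{-1}$ satisfying the zero--set rigidity $\widetilde{g}_1(x,y)=0 \Leftrightarrow x=0$. Because $\varphi_{\alpha}$ is a bijection of $\mathbb{R}^2$ that maps $\{x=0\}$ onto itself, conjugation by $\varphi_{\alpha}$ is functorial with respect to both composition and inversion. Thus it suffices to verify the corresponding statements on the holomorphic side and read the conclusions back through the equivalence.

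For (1), I would first record the identity $\widetilde{g \circ h}=\widetilde{g}\circ\widetilde{h}$, which is an honest composition of biholomorphisms between open subsets of $\mathbb{C}$ and therefore a biholomorphism itself. The zero--set condition propagates in the obvious chain: $(\widetilde{g}\circ\widetilde{h})_1(z)=0$ iff the first coordinate of $\widetilde{h}(z)$ is zero (by the condition on $\widetilde{g}$), iff the first coordinate of $z$ is zero (by the condition on $\widetilde{h}$). Invoking Theorem~\ref{important theorem} in the reverse direction then delivers $g\circ h$ (and symmetrically $h\circ g$) as an orientation preserving Grushin--conformal map in the required Sobolev class.

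For (2), I would use $\widetilde{g^{-1}} = (\widetilde{g})^{-1}$. The zero--set condition on $\widetilde{g}$ is precisely the statement that $\widetilde{g}$ maps $\{x=0\}$ bijectively onto $\{x=0\}$; inverting, $(\widetilde{g})^{-1}$ inherits the same condition. Since the inverse of a biholomorphism is biholomorphic, Theorem~\ref{important theorem} returns $g^{-1}$ (and analogously $h^{-1}$) to the desired class.

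For the closing statement about the dilation, I would compute $\widetilde{\delta_{\lambda}}$ directly using \eqref{Meyerson QS definition} and \eqref{formula for g wave}. A short substitution yields $\widetilde{\delta_{\lambda}}(z)=\lambda^{\alpha+1}z$ on all of $\mathbb{C}$, which is evidently a biholomorphism satisfying the zero--set condition; hence $\delta_{\lambda}$ itself is orientation preserving and Grushin--conformal, and the assertions about $g\circ \delta_{\lambda}$ and $\delta_{\lambda}\circ g$ then follow from part (1). The only substantive point in this proposal is making sure the equivalence in Theorem~\ref{important theorem} is genuinely bijective, so that the backwards direction (classical conformal plus zero--set condition $\Rightarrow$ Grushin--conformal with prescribed Sobolev regularity) is really available; granted that, the argument is pure diagrammatic book--keeping with no hidden analytic content.
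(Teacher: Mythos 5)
Your proposal is correct and follows essentially the same route as the paper: conjugation by the Meyerson map $\varphi_{\alpha}$, reducing composition (and inversion) to the corresponding facts for biholomorphisms with the zero--set condition, and then reading back through Theorem \ref{important theorem}. The paper only spells out the composition identity $g \circ h = \varphi_{\alpha}^{-1} \circ \widetilde{g} \circ \widetilde{h} \circ \varphi_{\alpha}$ and leaves the rest as analogous; your explicit treatment of inverses and of $\widetilde{\delta_{\lambda}}(z)=\lambda^{\alpha+1}z$ is consistent with that argument.
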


\begin{proof} In order to show assertion (1) let $\widetilde{g}$ and $\widetilde{h}$ be defined as in Theorem \ref{important theorem}. Then
    \begin{equation*}
        g \circ h = \varphi_{\alpha}^{-1} \circ \widetilde{g} \circ \varphi_{\alpha} \circ \varphi_{\alpha}^{-1} \circ \widetilde{h} \circ \varphi_{\alpha} = \varphi_{\alpha}^{-1} \circ \widetilde{g} \circ \widetilde{h} \circ \varphi_{\alpha}.
    \end{equation*}
    By Theorem \ref{important theorem} this completes the proof of (1), since $\widetilde{g} \circ \widetilde{h}=0$ if and only if $\{x=0\}$.

    The proofs of remaining assertions go the same lines and therefore we omit them.
\end{proof}

\begin{remark}[The failure of the Riemann mapping theorem in the Grushin plane]\label{FoRMT}
  There exist $\Omega, \Omega'$ simply--connected domains in $G^2_{\alpha}$ such that $\Omega \cap \{x=0\}$ and $\Omega' \cap \{x=0\}$ have the same number of connected components but there exists no orientation preserving conformal mapping $g~\in~ HW^{1,\frac{2(\alpha+1)}{\alpha}}_{\textrm{loc}}(\Omega,\Omega')$.
Indeed, let $\Omega, \Omega' \subset G^2_{\alpha}$ be defined as follows:

\begin{tikzpicture}[line cap=round,line join=round,>=triangle 45]
\begin{axis}[
axis lines=middle,
ymajorgrids=false,
xmajorgrids=false,
xmin=-3,
xmax=2,
ymin=-4,
ymax=3,
yticklabels={,,},
xticklabels={,,}
]
\clip(-8.589199177310892,-5.647098651559253) rectangle (13.153696529663714,7.4592461474184715);
\fill[line width=2pt,color=ududff,fill=ududff,fill opacity=0.10000000149011612] (-2,2) -- (1,2) -- (1,1) -- (-1,1) -- (-1,0) -- (1,0) -- (1,-1) -- (-1,-1) -- (-1,-2) -- (1,-2) -- (1,-3) -- (-2,-3) -- cycle;
\draw [line width=2pt,color=ududff] (-2,2)-- (1,2);
\draw [line width=2pt,color=ududff] (1,2)-- (1,1);
\draw [line width=2pt,color=ududff] (1,1)-- (-1,1);
\draw [line width=2pt,color=ududff] (-1,1)-- (-1,0);
\draw [line width=2pt,color=ududff] (-1,0)-- (1,0);
\draw [line width=2pt,color=ududff] (1,0)-- (1,-1);
\draw [line width=2pt,color=ududff] (1,-1)-- (-1,-1);
\draw [line width=2pt,color=ududff] (-1,-1)-- (-1,-2);
\draw [line width=2pt,color=ududff] (-1,-2)-- (1,-2);
\draw [line width=2pt,color=ududff] (1,-2)-- (1,-3);
\draw [line width=2pt,color=ududff] (1,-3)-- (-2,-3);
\draw [line width=2pt,color=ududff] (-2,-3)-- (-2,2);
\node[] at (-2.2,-3) {\Large $\Omega$};
\node[] at (1.9,-0.2) {\large $x$};
\node[] at (0.15,2.7) {\large $y$};
\end{axis}
\end{tikzpicture}
\begin{tikzpicture}[line cap=round,line join=round,>=triangle 45,x=1cm,y=1cm]
\begin{axis}[
axis lines=middle,
ymajorgrids=false,
xmajorgrids=false,
xmin=-3,
xmax=3,
ymin=-4,
ymax=3,
yticklabels={,,},
xticklabels={,,}]
\clip(-8.88,-5.98) rectangle (14.08,7.86);
\fill[line width=2pt,color=zzttqq,fill=zzttqq,fill opacity=0.10000000149011612] (-2,2) -- (2,2) -- (2,1) -- (-1,1) -- (-1,0) -- (2,0) -- (2,-3) -- (-2,-3) -- (-2,-2) -- (1,-2) -- (1,-1) -- (-2,-1) -- cycle;
\draw [line width=2pt,color=zzttqq] (-2,2)-- (2,2);
\draw [line width=2pt,color=zzttqq] (2,2)-- (2,1);
\draw [line width=2pt,color=zzttqq] (2,1)-- (-1,1);
\draw [line width=2pt,color=zzttqq] (-1,1)-- (-1,0);
\draw [line width=2pt,color=zzttqq] (-1,0)-- (2,0);
\draw [line width=2pt,color=zzttqq] (2,0)-- (2,-3);
\draw [line width=2pt,color=zzttqq] (2,-3)-- (-2,-3);
\draw [line width=2pt,color=zzttqq] (-2,-3)-- (-2,-2);
\draw [line width=2pt,color=zzttqq] (-2,-2)-- (1,-2);
\draw [line width=2pt,color=zzttqq] (1,-2)-- (1,-1);
\draw [line width=2pt,color=zzttqq] (1,-1)-- (-2,-1);
\draw [line width=2pt,color=zzttqq] (-2,-1)-- (-2,2);
\node[] at (-2.25,-3) {\Large $\Omega'$};
\node[] at (2.9,-0.2) {\large $x$};
\node[] at (0.15,2.7) {\large $y$};
\end{axis}
\end{tikzpicture}

In the picture below, we denote connected components of $\Omega \cap \{x=0\}$ by $a,b,c$, connected components of $\Omega' \cap \{x=0\}$ by $a',b',c'$ and by $A,B,C,D$ the connected components of $\Omega \cap \{x<0\}$, $\Omega \cap \{x>0\}$. Similarly, we denote the corresponding components of set $\Omega'$.

\begin{tikzpicture}[line cap=round,line join=round,>=triangle 45]
\begin{axis}[
axis lines=middle,
ymajorgrids=false,
xmajorgrids=false,
xmin=-3,
xmax=2,
ymin=-4,
ymax=3,
yticklabels={,,},
xticklabels={,,}
]
\clip(-8.589199177310892,-5.647098651559253) rectangle (13.153696529663714,7.4592461474184715);
\fill[line width=2pt,color=ududff,fill=ududff,fill opacity=0.10000000149011612] (-2,2) -- (1,2) -- (1,1) -- (-1,1) -- (-1,0) -- (1,0) -- (1,-1) -- (-1,-1) -- (-1,-2) -- (1,-2) -- (1,-3) -- (-2,-3) -- cycle;
\draw [line width=2pt,color=ududff] (-2,2)-- (1,2);
\draw [line width=2pt,color=ududff] (1,2)-- (1,1);
\draw [line width=2pt,color=ududff] (1,1)-- (-1,1);
\draw [line width=2pt,color=ududff] (-1,1)-- (-1,0);
\draw [line width=2pt,color=ududff] (-1,0)-- (1,0);
\draw [line width=2pt,color=ududff] (1,0)-- (1,-1);
\draw [line width=2pt,color=ududff] (1,-1)-- (-1,-1);
\draw [line width=2pt,color=ududff] (-1,-1)-- (-1,-2);
\draw [line width=2pt,color=ududff] (-1,-2)-- (1,-2);
\draw [line width=2pt,color=ududff] (1,-2)-- (1,-3);
\draw [line width=2pt,color=ududff] (1,-3)-- (-2,-3);
\draw [line width=2pt,color=ududff] (-2,-3)-- (-2,2);
\node[] at (-2.2,-3) {\Large $\Omega$};
\node[] at (-1.5,-0.5) {\large $A$};
\node[] at (0.5,1.5) {\large $B$};
\node[] at (0.5,-0.5) {\large $C$};
\node[] at (0.5,-2.5) {\large $D$};
\node[] at (-0.2,1.5) {\large $a$};
\node[] at (-0.2,-0.5) {\large $b$};
\node[] at (-0.2,-2.5) {\large $c$};
\node[] at (1.9,-0.2) {\large $x$};
\node[] at (0.15,2.7) {\large $y$};
\end{axis}
\end{tikzpicture}
\begin{tikzpicture}[line cap=round,line join=round,>=triangle 45,x=1cm,y=1cm]
\begin{axis}[
axis lines=middle,
ymajorgrids=false,
xmajorgrids=false,
xmin=-3,
xmax=3,
ymin=-4,
ymax=3,
yticklabels={,,},
xticklabels={,,}]
\clip(-8.88,-5.98) rectangle (14.08,7.86);
\fill[line width=2pt,color=zzttqq,fill=zzttqq,fill opacity=0.10000000149011612] (-2,2) -- (2,2) -- (2,1) -- (-1,1) -- (-1,0) -- (2,0) -- (2,-3) -- (-2,-3) -- (-2,-2) -- (1,-2) -- (1,-1) -- (-2,-1) -- cycle;
\draw [line width=2pt,color=zzttqq] (-2,2)-- (2,2);
\draw [line width=2pt,color=zzttqq] (2,2)-- (2,1);
\draw [line width=2pt,color=zzttqq] (2,1)-- (-1,1);
\draw [line width=2pt,color=zzttqq] (-1,1)-- (-1,0);
\draw [line width=2pt,color=zzttqq] (-1,0)-- (2,0);
\draw [line width=2pt,color=zzttqq] (2,0)-- (2,-3);
\draw [line width=2pt,color=zzttqq] (2,-3)-- (-2,-3);
\draw [line width=2pt,color=zzttqq] (-2,-3)-- (-2,-2);
\draw [line width=2pt,color=zzttqq] (-2,-2)-- (1,-2);
\draw [line width=2pt,color=zzttqq] (1,-2)-- (1,-1);
\draw [line width=2pt,color=zzttqq] (1,-1)-- (-2,-1);
\draw [line width=2pt,color=zzttqq] (-2,-1)-- (-2,2);
\node[] at (-2.25,-3) {\Large $\Omega'$};
\node[] at (-1.5,0.5) {\large $A'$};
\node[] at (1,1.5) {\large $B'$};
\node[] at (1.5,-1.5) {\large $C'$};
\node[] at (-1.5,-2.5) {\large $D'$};
\node[] at (-0.2,1.5) {\large $a'$};
\node[] at (-0.2,-0.5) {\large $b'$};
\node[] at (-0.2,-2.5) {\large $c'$};
\node[] at (2.9,-0.2) {\large $x$};
\node[] at (0.15,2.7) {\large $y$};
\end{axis}
\end{tikzpicture}

Assume that $g \in HW^{1,\frac{2(\alpha+1)}{\alpha}}_{\textrm{loc}}(\Omega,\Omega')$ is an orientation preserving conformal map. From the proof of assertion (d) in Theorem \ref{main theorem} we know that connected components of $\Omega \cap \{x=0\}$ are mapped by $g$ on connected components of $\Omega' \cap \{x=0\}$. Let us analyse the possible cases.
\begin{enumerate}
    \item Assume that $g(a) = a'$. Then $\overline{B} \cap \{x=0\}$ has one connected component and $\overline{A'} \cap \{x=0\}$ has two connected components (see the picture). Therefore $g(B)=B'$, and so $g(A)=A'$ but $\overline{A} \cap \{x=0\}$ has three connected components, while $\overline{A'} \cap \{x=0\}$ has only two, which contradicts the conformality of $g$. 
    \item Assume that $g(a) = b'$. Then $\overline{C} \cap \{x=0\}$ has one connected component, while both, $\overline{A'} \cap \{x=0\}$ and $\overline{C'} \cap \{x=0\}$ have two connected components. Again, this may not happen for conformal $g$.
    \item Assume $g(a) = c'$. This case is analogous to case (1).
\end{enumerate}
In conclusion, an orientation preserving $g \in HW^{1,\frac{2(\alpha+1)}{\alpha}}_{\textrm{loc}}(\Omega,\Omega')$ cannot be conformal.
\end{remark}

Recall that an absolutely continuous curve $\gamma:[a,b] \to G^2_{\alpha}$ is admissible if and only if there exist $u_1,u_2 \in L^1([a,b])$ such that
    \begin{equation*}
        \gamma'(t) = u_1(t)X(\gamma(t)) + u_2(t)Y_{\alpha}(\gamma(t)), \quad \textrm{ for a.e. } t \in [a,b],
    \end{equation*}
    cf. \cite{Sub-Riemannian geometrybellaiche} page 6. That is equivalent to $\gamma_1', \frac{\gamma_2'}{\lvert \gamma_1\rvert^{\alpha}} \in L^1([a,b])$.
\begin{corollary}[Preservation of admissible curves]\label{preserving admissible curves remark}
    Let $\Omega, \Omega' \subset G^2_{\alpha}$ be open and assume that $g \in HW^{1,\frac{2(\alpha+1)}{\alpha}}(\Omega,\Omega')$ is an orientation preserving conformal map. Fix an absolutely continuous (in the Euclidean metric) curve $\gamma~:~[a,b]\to~\Omega~$ s.t. $\gamma_1 \neq 0$ a.e. in $[a,b]$. Then, $\gamma$ is admissible if and only if $g(\gamma)$ is admissible.

\end{corollary}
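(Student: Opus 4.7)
The plan is to reduce the statement to the classical complex-analytic setting using the factorization from Theorem \ref{important theorem}, which yields $g=\varphi_{\alpha}^{-1}\circ\widetilde{g}\circ\varphi_{\alpha}$ with $\widetilde{g}:\varphi_{\alpha}(\Omega)\to\varphi_{\alpha}(\Omega')$ a classical conformal mapping satisfying $\widetilde{g}_{1}(x,y)=0$ if and only if $x=0$. Since Corollary \ref{basic properties of conformal} provides that $g^{-1}$ is a conformal map of the same type, it suffices to prove the forward implication: if $\gamma$ is admissible, then $g\circ\gamma$ is admissible. I introduce $\widetilde{\gamma}:=\varphi_{\alpha}\circ\gamma$ and $\widetilde{\delta}:=\widetilde{g}\circ\widetilde{\gamma}$, so that $g\circ\gamma=\varphi_{\alpha}^{-1}\circ\widetilde{\delta}$.

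The first step is to translate admissibility into integrability conditions on $\widetilde{\gamma}$. A direct chain-rule computation on the full-measure set $\{t:\gamma_{1}(t)\neq 0\}$ gives $\widetilde{\gamma}_{1}'=\lvert\gamma_{1}\rvert^{\alpha}\gamma_{1}'$, $\widetilde{\gamma}_{2}'=\gamma_{2}'$ and the identity $\lvert\gamma_{1}\rvert^{\alpha}=((\alpha+1)\lvert\widetilde{\gamma}_{1}\rvert)^{\alpha/(\alpha+1)}$. Substituting these into the admissibility conditions $\gamma_{1}'\in L^{1}([a,b])$ and $\gamma_{2}'/\lvert\gamma_{1}\rvert^{\alpha}\in L^{1}([a,b])$ rewrites admissibility of $\gamma$ as the single requirement
\begin{equation*}
\int_{a}^{b}\frac{\lvert\widetilde{\gamma}_{1}'(t)\rvert+\lvert\widetilde{\gamma}_{2}'(t)\rvert}{\lvert\widetilde{\gamma}_{1}(t)\rvert^{\alpha/(\alpha+1)}}\,dt<\infty,
\end{equation*}
and the analogous reformulation with $\widetilde{\delta}$ in place of $\widetilde{\gamma}$ characterises admissibility of $g\circ\gamma$.

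The core of the argument consists of two pointwise comparisons valid on the compact set $\widetilde{\gamma}([a,b])\subset\varphi_{\alpha}(\Omega)$. Since $\widetilde{g}$ is a classical biholomorphism, $\lvert\widetilde{g}'\rvert$ is continuous and strictly positive, hence bounded above and below on the compact image; this yields that $\lvert\widetilde{\delta}'(t)\rvert$ is comparable to $\lvert\widetilde{\gamma}'(t)\rvert$. Secondly, I invoke Lemma \ref{wazny lemat} to deduce that $\widetilde{g}_{1}(x,y)/x$ extends continuously across $\{x=0\}$ to the non-zero value $\partial_{x}\widetilde{g}_{1}(0,y)$; coupled with the condition $\widetilde{g}_{1}=0$ if and only if $x=0$, this extension is continuous and non-vanishing on all of $\varphi_{\alpha}(\Omega)$, and hence bounded above and below on $\widetilde{\gamma}([a,b])$. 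This forces $\lvert\widetilde{\delta}_{1}(t)\rvert=\lvert\widetilde{g}_{1}(\widetilde{\gamma}(t))\rvert$ to be comparable to $\lvert\widetilde{\gamma}_{1}(t)\rvert$, and raising to the power $\alpha/(\alpha+1)$ transfers the bound to the denominators. Together the two comparisons show that the admissibility integrals for $\gamma$ and for $g\circ\gamma$ are bounded by each other up to multiplicative constants, and are therefore simultaneously finite; this yields the forward implication, and applying the same reasoning to $g^{-1}$ gives the reverse.

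The step I expect to be the main technical obstacle is verifying that $\delta := g\circ\gamma$ is absolutely continuous in the Euclidean metric so that its admissibility integral is meaningful at all, since $\varphi_{\alpha}^{-1}$ is only H\"older continuous near $\{x=0\}$. I would handle this by observing that $\widetilde{\delta}=\widetilde{g}\circ\widetilde{\gamma}$ is AC as the composition of the smooth map $\widetilde{g}$ with the AC curve $\widetilde{\gamma}$, that $\widetilde{\delta}_{1}=0$ only on a null set by the assumption $\gamma_{1}\neq 0$ a.e., and that the pointwise identities $\delta_{1}'=\widetilde{\delta}_{1}'/\lvert\delta_{1}\rvert^{\alpha}$ and $\delta_{2}'=\widetilde{\delta}_{2}'$, combined with the integral estimate obtained above, show that $\delta$ has $L^{1}$ distributional derivatives and is therefore absolutely continuous.
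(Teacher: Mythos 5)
Your reduction to the complex side is sound and in fact runs parallel to the paper's own argument: the paper also settles the converse by applying the forward direction to $g^{-1}$ via Corollary \ref{basic properties of conformal}, and your two comparability estimates (boundedness of $\lvert\widetilde{g}'\rvert$ above and below on the compact image, and the continuous non-vanishing extension of $\widetilde{g}_1(x,y)/x$ across $\{x=0\}$ via Lemma \ref{wazny lemat}) are exactly the complex-plane counterparts of the paper's boundedness of $\frac{\partial g_1}{\partial x}$ and $\lvert x\rvert^{\alpha}\frac{\partial g_1}{\partial y}$ along $\gamma$, obtained there from \eqref{derivatives of g} and the extension $\mathrm{Ext}$ at the end of the proof of Theorem \ref{important theorem}. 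So the integrability half of your argument, including the reformulation of admissibility as finiteness of the weighted integral, is correct.

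The gap is in your final step, the absolute continuity of $\delta_1=(g\circ\gamma)_1$. From the pointwise a.e.\ identity $\delta_1'=\widetilde{\delta}_1'/\lvert\delta_1\rvert^{\alpha}$ and the $L^1$ bound on this pointwise derivative you conclude that $\delta_1$ has an $L^1$ distributional derivative; that implication is false in general --- the Cantor function is continuous and differentiable a.e.\ with pointwise derivative $0\in L^1$, yet its distributional derivative is a singular measure and it is not absolutely continuous. In your setting the potential singular part would sit on the closed null set $\{t:\gamma_1(t)=0\}=\{t:\widetilde{\delta}_1(t)=0\}$, and knowing this set is null does not exclude it. What closes the argument is a Lusin (N)/Banach--Zarecki type criterion, which is how the paper proceeds: $\delta_1$ is continuous, its pointwise derivative is in $L^1$ by your estimate, and it is differentiable off a set whose image under $\delta_1$ is Lebesgue null --- indeed $\delta_1(\{\widetilde{\delta}_1=0\})=\{0\}$, while the null set where $\widetilde{\delta}_1$ fails to be differentiable is mapped to a null set because $\widetilde{\delta}_1$ is absolutely continuous and $s\mapsto\sign(s)\bigl((\alpha+1)\lvert s\rvert\bigr)^{\frac{1}{\alpha+1}}$ is absolutely continuous on compacts, hence satisfies condition (N). The criterion invoked in the paper (cf.\ \cite{J Yeh}, pp.~274--275) then gives absolute continuity of $\delta_1$. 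With that substitution your proof is complete; as written, the step from pointwise to distributional derivatives does not stand.
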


\begin{proof}
    Let $\gamma:[a,b]\to G^2_{\alpha}$ be an absolutely continuous curve. Recalling that $\gamma_1^{-1}(0)$ is closed by continuity of $\gamma$ and using the property $\overline{W}g=0$ in $R_g$, see \eqref{formulas for wirtinger operators}, we compute:
    \begin{equation}\label{pres of adm curves}
    \begin{aligned}
    &\frac{d}{d t} g_1(\gamma(t)) = \frac{\partial g_1}{\partial x} \gamma_1' +  \frac{\partial g_1}{\partial y} \gamma_2' = \frac{\partial g_1}{\partial x} \gamma_1' + \frac{\gamma_2'}{\lvert \gamma_1 \rvert^{\alpha}} \lvert \gamma_1 \rvert^{\alpha} \frac{\partial g_1}{\partial y}\\
    &\frac{d}{d t} g_2(\gamma(t)) = \frac{\partial g_2}{\partial x} \gamma_1' +  \frac{\partial g_2}{\partial y} \gamma_2' = - \lvert \gamma_1 g_1 \rvert^{\alpha} \frac{\partial g_1}{\partial y} \gamma_1' + \frac{\gamma_2'}{\lvert \gamma_1 \rvert^{\alpha}} \lvert g_1 \rvert^{\alpha} \frac{\partial g_1}{\partial x} 
    \end{aligned}
    \end{equation}
a.e. in $[a,b]$. Assume that $\gamma$ is admissible. Thus $\gamma_1, \frac{\gamma_2'}{\lvert \gamma_1 \rvert^{\alpha}} \in L^1([a,b])$ and, by equations \eqref{derivatives of g} and Lemma \ref{wazny lemat}, expressions $\frac{\partial g_1}{\partial x}$ and $\lvert \gamma_1 \rvert^{\alpha}\frac{\partial g_1}{\partial y}$ are bounded in  $\gamma([a,b])$, cf. the end of the proof of Theorem \ref{important theorem}. Therefore, $\frac{d}{d t} g_1(\gamma(t)), \frac{\frac{d}{d t} g_2(\gamma(t))}{\lvert g_1 \rvert^{\alpha}} \in L^1([a,b])$. We now show that $g(\gamma)$ is absolutely continuous. It is clear by \eqref{derivatives of g} that $g_2$ is $C^1$, and therefore $g_2(\gamma)$ is absolutely continuous. Function $g_1(\gamma)$ is absolutely continuous since it is continuous, differentiable a.e. on $[a,b]$, the derivative $\frac{d}{dt}g_1(\gamma) \in L^1([a,b])$ and the image of the set of non--differentiability of $g_1$ is measure zero on the real line, i.e. $g_1(\{\gamma_1=0\})=\{0\}$ is measure zero. For more details c.f. with the proof of Banach--Zarecki Theorem in \cite{J Yeh}, pages 274--275. Thus $g(\gamma)$ is absolutely continuous and admissible.

Assume conversely, that $g(\gamma)$ is admissible. By virtue o Corollary \ref{basic properties of conformal} it holds that $g^{-1} \in HW^{1,\frac{2(\alpha+1)}{\alpha}}(\Omega',\Omega)$ is an orientation preserving conformal map. Thus $g^{-1}(g(\gamma))=\gamma$ is admissible.
\end{proof}
We remark that Theorem \ref{important theorem} allows to consider unbounded sets $\Omega, \Omega'$. In particular, $\Omega=\Omega'=G^2_{\alpha}$.
\begin{example}
    By Theorem \ref{important theorem} we know that orientation preserving and conformal mappings $g \in HW^{1,\frac{2(\alpha+1)}{\alpha}}_{\textrm{loc}}(G^2_{\alpha},G^2_{\alpha})$ are exactly the conjugates via Meyerson map of conformal mappings $\widetilde{g}=\widetilde{g}_1+i\widetilde{g}_2:\mathbb{C} \to \mathbb{C}$ such that $\widetilde{g}_1 = 0$ if and only if $Re(z)=0$, i.e. 
\begin{equation*}
    \widetilde{g}(z) = az+b, \qquad \widetilde{g}_1=0 \ \Leftrightarrow \ Re(z)=0,
\end{equation*}
for some $a,b \in \mathbb{C}$. Denote $a = a_1 + ia_2$, $b=b_1 + ib_2$ and $z = x+iy$. We calculate
\begin{equation*}
\begin{aligned}
    \widetilde{g}(z) &= (a_1 + ia_2)(x+iy) + (b_1 + ib_2) =  a_1x - a_2y + b_1 + i\widetilde{g}_2(z).  
\end{aligned}
\end{equation*}
Thus we obtain that the condition $\widetilde{g}_1 = 0$ if and only if $Re(z)=0$ is equivalent to $a_2=b_1=0$ and $a_1 \neq 0$.

Therefore, all the orientation preserving conformal mappings $g \in HW^{1,\frac{2(\alpha+1)}{\alpha}}_{\textrm{loc}}(G^2_{\alpha},G^2_{\alpha})$ are of the form $g = \varphi_{\alpha}^{-1} \circ \widetilde{g} \circ \varphi_{\alpha}$, where $\widetilde{g}(z) = Re(a)z + iIm(b)$, $Re(a) \neq 0$. Thus, by definition of $\varphi_{\alpha}$, we obtain that all the entire orientation preserving conformal maps in $G^2_{\alpha}$ are given by the following formula:
    \begin{equation*}
        g(x,y) = (\sign{(a)}\lvert a \rvert^{\frac{1}{\alpha+1}}x,ay+b)
    \end{equation*}
    for some $a \in \mathbb{R} \setminus \{0\}$ and $b \in \mathbb{R}$.
\end{example}
\begin{example}[Joukovski mapping on $G^2_{\alpha}$]\label{Joukovski map on Grushin}
    Recall the Joukovski mapping $w(z)= z +\frac{1}{z}$ for $z \in \mathbb{C} \setminus \{0\}$. It is conformal in a sense that $\frac{\partial w}{\partial \bar{z}}=0$ and $\frac{\partial w}{\partial z} \neq 0$ for $z \neq \pm 1$. It is easy to verify that $Re(w) = 0$ if and only if $Re(z)=0$. Let $U \subset \mathbb{C}$ be an open set such that $w|_U$ is a homeomorphism and $\pm 1 \notin U$. Then $\varphi_{\alpha}^{-1} \circ w \circ \varphi_{\alpha}$ is conformal on $\varphi_{\alpha}^{-1}(U)$. That means that the Joukovski map has its natural counterpart on the Grushin plane.
\end{example}

In the next observation we discuss the distortion of length of curves under conformal mappings in the Grushin plane.

\begin{prop}\label{length distortion estimate prop}
    Let $\Omega, \Omega' \subset G^2_{\alpha}$ be open, connected and assume that $g \in HW^{1,\frac{2(\alpha+1)}{\alpha}}(\Omega,\Omega')$ is an orientation preserving conformal map. Let $\gamma:[a,b] \to U \subset \Omega$ be an absolutely continuous curve in the Euclidean metric s.t. $\gamma' \neq 0$ a.e. in $(a,b)$ and $\gamma_1 \neq 0$ a.e. in $[a,b]$. Then there exist positive constants $C_1,C_2$ dependent only on $g$ and $U$ such that
\begin{equation*}
    C_1 l(\gamma) \le l(g(\gamma)) \le C_2 l(\gamma),
\end{equation*}
where $l$ stands for the length of a curve in $G^2_{\alpha}$.
\end{prop}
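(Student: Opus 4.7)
The plan is to derive a pointwise identity relating the Grushin length densities of $\gamma$ and $g(\gamma)$, and then to bound the resulting Jacobian factor using Theorem \ref{important theorem}.

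For the first step, write $u_1 := \gamma_1'(t)$ and $u_2 := \gamma_2'(t)/|\gamma_1(t)|^\alpha$, so that $l(\gamma) = \int_a^b \sqrt{u_1^2 + u_2^2}\, dt$. Applying the chain rule to $g_i(\gamma(t))$ and substituting the identities $Y_\alpha g_2 = |g_1|^\alpha X g_1$ and $X g_2 = -|g_1|^\alpha Y_\alpha g_1$ (which, by the discussion culminating in \eqref{orientation preserving equivalence}, are equivalent to $\overline{W}g = 0$ for orientation preserving $g$, and hence hold by Theorem \ref{main theorem}(c)), I would obtain, with $A := (Xg_1)(\gamma(t))$ and $B := (Y_\alpha g_1)(\gamma(t))$,
\begin{equation*}
    \frac{d}{dt} g_1(\gamma) = A u_1 + B u_2, \qquad \frac{1}{|g_1|^\alpha}\frac{d}{dt} g_2(\gamma) = -B u_1 + A u_2.
\end{equation*}
The matrix $\bigl(\begin{smallmatrix}A & B \\ -B & A\end{smallmatrix}\bigr)$ is a scaled rotation, so
\begin{equation*}
    \Bigl(\tfrac{d g_1(\gamma)}{dt}\Bigr)^2 + \tfrac{1}{|g_1|^{2\alpha}}\Bigl(\tfrac{d g_2(\gamma)}{dt}\Bigr)^2 = (A^2+B^2)(u_1^2+u_2^2),
\end{equation*}
and therefore $l(g(\gamma)) = \int_a^b |\nabla_H^\alpha g_1|(\gamma(t)) \sqrt{u_1^2+u_2^2}\, dt$. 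The whole proposition reduces to showing that the factor $|\nabla_H^\alpha g_1|$ is bounded above and below by positive constants, uniformly on $U$.

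For the second step, invoke Theorem \ref{important theorem}. Using the formulas \eqref{derivatives of g} for $\partial g_1/\partial x$ and $\partial g_1/\partial y$, together with the Cauchy--Riemann identity $(\partial_x \widetilde{g}_1)^2 + (\partial_y \widetilde{g}_1)^2 = |\partial \widetilde{g}/\partial z|^2$ for the conformal map $\widetilde{g} = \varphi_\alpha \circ g \circ \varphi_\alpha^{-1}$, I would compute
\begin{equation*}
    |\nabla_H^\alpha g_1|^2 = \frac{|x|^{2\alpha}}{|g_1|^{2\alpha}} \left|\frac{\partial \widetilde{g}}{\partial z}(\widetilde{x},y)\right|^2.
\end{equation*}
The first factor is exactly the square of the continuous, strictly positive function $\mathrm{Ext}$ constructed in Theorem \ref{important theorem}, and the second factor is continuous and non-vanishing on $\varphi_\alpha(\Omega)$ because $\widetilde{g}$ is a biholomorphism. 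Hence $|\nabla_H^\alpha g_1|$ is continuous and strictly positive on all of $\Omega$, including across the singular line $\{x=0\}$. On any precompact subset $U \subset\subset \Omega$ (which is the natural reading of the hypothesis) it attains a positive infimum and finite supremum; taking $C_1, C_2$ to be these bounds delivers the asserted length-distortion estimate, with constants depending only on $g$ and $U$.

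The main obstacle will be the behaviour across the singular line. Read from the raw chain-rule formulas, neither $X g_1$ nor $Y_\alpha g_1$ is manifestly finite and non-zero at points where both $x = 0$ and $g_1 = 0$, and the ratio $|x|^\alpha/|g_1|^\alpha$ is of indeterminate form $0/0$. This is precisely the delicate issue resolved by the continuous extension $\mathrm{Ext}$ at the end of Theorem \ref{important theorem}, whose existence ultimately rests on Lemma \ref{wazny lemat}. Once that extension is in hand, the continuity argument is routine.
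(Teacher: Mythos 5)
Your first step coincides with the paper's: both derive, from $\overline{W}g=0$ (equivalently $Y_\alpha g_2=|g_1|^\alpha Xg_1$, $Xg_2=-|g_1|^\alpha Y_\alpha g_1$), the pointwise identity $d_\alpha g(\gamma)=\lvert\nabla_H g_1\rvert\, d_\alpha\gamma$, so the whole issue becomes a two-sided bound on $\lvert\nabla_H g_1\rvert$ along $\gamma$. Where you diverge is the lower bound. The paper gets the upper bound exactly as you do (formulas \eqref{derivatives of g} plus the continuity of the extension from the end of Theorem \ref{important theorem}, i.e.\ Lemma \ref{wazny lemat}), but for the lower bound it composes with $g^{-1}$: from $d_\alpha\gamma=d_\alpha(g\circ g^{-1})(\gamma)$ and $\gamma'\neq 0$ a.e.\ it extracts the a.e.\ identity $\lvert\nabla_H g_1(g^{-1})\rvert\,\lvert\nabla_H (g^{-1})_1\rvert\equiv 1$ and then applies the upper bound to $g^{-1}$ (conformal by Corollary \ref{basic properties of conformal}). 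You instead factor $\lvert\nabla_H g_1\rvert^2=\frac{|x|^{2\alpha}}{|g_1|^{2\alpha}}\,\lvert\widetilde g'(\widetilde x+iy)\rvert^2=\mathrm{Ext}^2\cdot\lvert\widetilde g'\rvert^2$ and observe that both factors extend continuously and positively across $\{x=0\}$ (Ext by Lemma \ref{wazny lemat}, $\widetilde g'\neq 0$ by biholomorphy), so compactness of $\overline U$ gives both constants at once. This is a legitimate and arguably cleaner route: it avoids the inverse map entirely (and the implicit need to control $g^{-1}$ on $g(U)$), and it yields the stronger statement that the length density is a continuous, strictly positive function on all of $\Omega$. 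Two small points you share with the paper and should make explicit: the hypothesis must be read with $U\subset\subset\Omega$ (or one works on the compact set $\gamma([a,b])$), and the a.e.\ chain rule together with absolute continuity of $g\circ\gamma$ (so that $l(g(\gamma))$ is indeed the integral of its derivative) should be justified by citing Corollary \ref{preserving admissible curves remark}, as the paper does in its closing line.
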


\begin{proof}
Let $\gamma:[a,b] \to U \subset \Omega$ be an absolutely continuous curve such that $\gamma(t) = (x(t),y(t))$ and denote
\begin{equation*}
    d_{\alpha} \gamma = \sqrt{\left(\frac{d x}{dt}\right)^2 + \frac{1}{\lvert x\rvert^{2\alpha}}\left(\frac{d y}{dt}\right)^2} \ dt.
\end{equation*}
Let $g = (g_1,g_2)$, $g \in HW^{1,\frac{2(\alpha+1)}{\alpha}}(\Omega,\Omega')$ be an orientation preserving conformal map. Then, for every open set $U \subset \subset \Omega$ the norm of the horizontal gradient $\lvert \nabla_H g_1 \rvert$ is bounded in $U \setminus \{x=0\}$, i.e. there exists a constant $C_1 = C_1(U,g)$ such that
\begin{equation}\label{bounded horizontal gradient}
    \lvert \nabla_H g_1(x,y) \rvert \le C_1 \textrm{ for every } (x,y) \in U \setminus \{x=0\}.
\end{equation}
Indeed, it follows from equations \eqref{derivatives of g} and Lemma \ref{wazny lemat}, cf. ending lines of the proof of Theorem \ref{important theorem}. Moreover, by direct computations, using equation $\overline{W}g=0$ in $R_g$
\begin{equation*}
\begin{aligned}
    d_{\alpha}g(\gamma) &= \sqrt{\left(\frac{d g_1}{dt}\right)^2 + \frac{1}{\lvert g_1\rvert^{2\alpha}}\left(\frac{d g_2}{dt}\right)^2} \ dt\\
    &= \sqrt{\left(\frac{\partial g_1}{\partial x} \frac{dx}{dt} + \frac{\partial g_1}{\partial y}\frac{dy}{dt}\right)^2 + \frac{1}{\lvert g_1 \rvert^{2\alpha}}\left( \frac{\lvert g_1 \rvert^{\alpha}}{\lvert x \rvert^{\alpha}} \frac{\partial g_1}{\partial x} \frac{dy}{dt} - \lvert x g_1 \rvert^{\alpha} \frac{\partial g_1}{\partial y} \frac{dx}{dt} \right)^2} dt\\
    &=\sqrt{\left(\frac{\partial g_1}{\partial x}\right)^2\left( \left( \frac{dx}{dt} \right)^2 + \frac{1}{\lvert x \rvert^2} \left( \frac{dy}{dt}\right)^2\right) + \lvert x \rvert^{2\alpha}\left(\frac{\partial g_1}{\partial y}\right)^2\left( \left( \frac{dx}{dt} \right)^2 + \frac{1}{\lvert x \rvert^{2\alpha}} \left( \frac{dy}{dt}\right)^2\right)} dt\\
    &=\sqrt{\left( \frac{\partial g_1}{\partial x}\right)^2 + \lvert x \rvert^{2\alpha} \left( \frac{\partial g_1}{\partial y} \right)^2} \sqrt{\left(\frac{d x}{dt}\right)^2 + \frac{1}{\lvert x\rvert^{2\alpha}}\left(\frac{d y}{dt}\right)^2} \ dt = \lvert \nabla_H g_1 (x,y) \rvert d_{\alpha}\gamma.
\end{aligned}
\end{equation*}
Therefore
\begin{equation*}
    d_{\alpha} \gamma =  d_{\alpha} (g \circ g^{-1})(\gamma) = \lvert \nabla_H g_1(g^{-1}(x,y)) \rvert \lvert \nabla_H (g^{-1})_1(x,y) \rvert d_{\alpha} \gamma.
\end{equation*}
Since $\gamma' \neq 0$ a.e. in $(a,b)$, we obtain 
\begin{equation}\label{equiv one}
\lvert \nabla_H g_1(g^{-1}(x,y)) \rvert \lvert \nabla_H (g^{-1})_1(x,y) \rvert \equiv 1    \quad \textrm{a.e. in } [a,b]
\end{equation}
and by \eqref{bounded horizontal gradient} and \eqref{equiv one} there exist positive constants $C_1, C_2$ dependent only on $g$ and $U$ such that
\begin{equation*}
   C_1 \le \lvert \nabla_H g_1(x,y) \rvert \le C_2 \quad \textrm{a.e. in } [a,b].
\end{equation*}
The assertion of the proposition easily follows. We note that by virtue of Corollary \ref{preserving admissible curves remark} there is no problem with definition of $g(\gamma)$.
\end{proof}

Let us recall the following theorem by Carath\'eodory.
\begin{theorem}\label{Caratheodory theorem}
    Let $\Omega_1$, $\Omega_2$ be bounded simply--connected Jordan domains in $\mathbb{C}$. If $f:\Omega_1 \to \Omega_2$ is a conformal mapping, then $f$ extends to a homeomorphism between $\overline{\Omega}_1$ and $\overline{\Omega}_2$
\end{theorem}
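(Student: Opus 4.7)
The plan is to reduce to the canonical case where one of the domains is the unit disk $\mathbb{D}$ via the Riemann mapping theorem, and then to establish a continuous extension to the boundary together with its injectivity. If $\psi_j : \mathbb{D} \to \Omega_j$ are Riemann maps ($j=1,2$), then $\psi_2^{-1}\circ f\circ \psi_1$ is a biholomorphism of $\mathbb{D}$, hence a M\"obius transformation, which certainly extends to a homeomorphism $\overline{\mathbb{D}}\to\overline{\mathbb{D}}$. Therefore it suffices to show that any conformal map $\psi:\mathbb{D}\to\Omega$ onto a bounded Jordan domain extends to a homeomorphism $\overline{\mathbb{D}}\to\overline{\Omega}$.

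For continuous extension at a point $\zeta_0\in\partial\mathbb{D}$, I would use the classical length--area argument. Setting $\gamma_r:=\{z\in\mathbb{D}: |z-\zeta_0|=r\}$ and applying Cauchy--Schwarz in the angular variable followed by Fubini gives
\begin{equation*}
    \int_0^{1}\frac{(\mathrm{length}(\psi(\gamma_r)))^2}{r}\,dr \;\le\; \pi\int_{\mathbb{D}\cap B(\zeta_0,1)}|\psi'(z)|^2\,dA(z)\;\le\;\pi\,\mathrm{Area}(\Omega)<\infty,
\end{equation*}
so a sequence $r_n\to 0$ exists with $\mathrm{length}(\psi(\gamma_{r_n}))\to 0$. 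Each image $\psi(\gamma_{r_n})$ is a crosscut of $\Omega$ whose two endpoints lie on $\partial\Omega$; since the Euclidean diameters of these crosscuts tend to $0$ and $\partial\Omega$ is a Jordan curve, both endpoints converge to the same point $p\in\partial\Omega$, and the bounded components cut off by successive crosscuts form a nested sequence whose $\psi$-images shrink to $\{p\}$. This delivers a continuous extension $\overline{\psi}:\overline{\mathbb{D}}\to\overline{\Omega}$.

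For injectivity of $\overline{\psi}$ on $\partial\mathbb{D}$, suppose $\overline{\psi}(\zeta_1)=\overline{\psi}(\zeta_2)=p$ for distinct $\zeta_1,\zeta_2\in\partial\mathbb{D}$. The chord $[\zeta_1,\zeta_2]$ in $\overline{\mathbb{D}}$ is mapped to a closed loop $L\subset\overline{\Omega}$ based at $p$ whose interior lies in $\Omega$, so $L\cap\partial\Omega=\{p\}$. The chord splits $\mathbb{D}$ into two open subregions $U_1,U_2$; by conformality $\psi(U_j)$ are disjoint open subsets of $\Omega\setminus L$, and their union is $\Omega\setminus L$. By the Jordan curve theorem the complement $\mathbb{C}\setminus L$ has two components, only one of which meets $\partial\Omega\setminus\{p\}$. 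This forces one of $\psi(U_j)$ to miss an entire arc of $\partial\Omega$, contradicting that every boundary point of $\Omega$ is the limit of some $\overline{\psi}$-image from $\mathbb{D}$ (which in turn follows from surjectivity of $\psi$ and continuity of the extension). Hence $\overline{\psi}$ is injective.

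The main obstacle is the continuous extension: the length--area estimate is elementary, but upgrading "a sequence of crosscuts with vanishing length" to honest sequential continuity of $\overline{\psi}$ at $\zeta_0$ crucially uses that $\partial\Omega$ is a Jordan curve (otherwise the two endpoints of a short crosscut can accumulate at two distinct prime ends, and no continuous extension exists). Once continuous extension and boundary injectivity are in place, the proof concludes immediately: $\overline{\psi}$ is a continuous bijection from the compact space $\overline{\mathbb{D}}$ onto the Hausdorff space $\overline{\Omega}$, hence a homeomorphism, and composing two such extensions gives the desired homeomorphism $\overline{\Omega}_1\to\overline{\Omega}_2$.
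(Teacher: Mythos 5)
The paper does not actually prove this statement: it is the classical Carath\'eodory extension theorem, quoted as a known result and then transferred to the Grushin plane via the Meyerson map, so there is no internal proof to compare against. Your overall route (reduction to a Riemann map $\psi:\mathbb{D}\to\Omega$ via the Riemann mapping theorem, the length--area estimate producing crosscuts of vanishing length, and the nested-domain argument for continuity) is the standard textbook proof, and you correctly identify the one place where the Jordan hypothesis is indispensable.

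There is, however, a genuine gap in your injectivity step. The Jordan curve theorem gives that one of the two pieces, say $\psi(U_1)$, lies in the component of $\mathbb{C}\setminus L$ that does not meet $\partial\Omega\setminus\{p\}$, so $\overline{\psi(U_1)}\cap\partial\Omega=\{p\}$. But this does not contradict the fact that every point of $\partial\Omega$ is a limit of values of $\psi$: those boundary points can perfectly well be approached through the other piece $\psi(U_2)$, and indeed $\overline{\psi}(A_2)\supset\partial\Omega\setminus\{p\}$ for the complementary boundary arc $A_2$, so no contradiction arises at the point where you claim one. What the configuration actually yields is that $\overline{\psi}$ is constant, equal to $p$, on the arc $A_1\subset\partial\mathbb{D}$ bounding $U_1$, because $\overline{\psi}(A_1)\subset\overline{\psi(U_1)}\cap\partial\Omega=\{p\}$. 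To conclude you need an additional analytic ingredient: for instance, apply the Schwarz reflection principle to $\psi-p$, which is bounded, holomorphic, and extends continuously to the open arc $A_1$ with value $0$ there, hence extends holomorphically across $A_1$ and vanishes on that arc, forcing $\psi\equiv p$ and contradicting that $\psi$ is nonconstant (alternatively, invoke the F.\ and M.\ Riesz theorem, or rerun the length--area argument centered at an interior point of $A_1$). With that step inserted, your argument is complete; the remaining pieces (properness of $\psi$ giving boundary-to-boundary mapping, and the compact-to-Hausdorff criterion for a continuous bijection to be a homeomorphism) are fine.
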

Using Theorem \ref{important theorem} we can rephrase the above theorem in the setting of the Grushin plane.
\begin{corollary}\label{caratheodory theorem on grushin}
    Let $\Omega_1$, $\Omega_2$ be bounded simply--connected Jordan domains in $G^2_{\alpha}$. If $g \in HW^{1,\frac{2(\alpha+1)}{\alpha}}_{\textrm{loc}}(\Omega,\Omega')$ is an orientation preserving conformal map, then $g$ extends to a homeomorphism $\hat{g} = (\hat{g}_1,\hat{g}_2)$ from $\overline{\Omega}_1$ to $\overline{\Omega}_2$. Moreover, $\hat{g}_1 = 0$ if and only if $x=0$.
\end{corollary}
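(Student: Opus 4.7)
The plan is to reduce the problem to the classical Carath\'eodory extension theorem on the complex plane, by conjugating $g$ via the Meyerson quasisymmetry $\varphi_\alpha$, in the spirit of Theorem \ref{important theorem}.

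First, I would observe that $\varphi_\alpha:\mathbb{R}^2 \to \mathbb{R}^2$ is a global homeomorphism, since the real map $x \mapsto \frac{x|x|^\alpha}{\alpha+1}$ is a strictly increasing odd continuous bijection of $\mathbb{R}$. Consequently $\varphi_\alpha(\Omega_1)$ and $\varphi_\alpha(\Omega_2)$ are bounded, simply--connected Jordan domains in $\mathbb{C}$, and $\varphi_\alpha(\overline{\Omega_i}) = \overline{\varphi_\alpha(\Omega_i)}$ for $i=1,2$.

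Next, set $\widetilde{g} := \varphi_\alpha \circ g \circ \varphi_\alpha^{-1}$. By Theorem \ref{important theorem}, $\widetilde{g}$ is a conformal biholomorphism $\varphi_\alpha(\Omega_1) \to \varphi_\alpha(\Omega_2)$ satisfying $\widetilde{g}_1(x,y) = 0$ if and only if $x = 0$. The classical Carath\'eodory theorem (Theorem \ref{Caratheodory theorem}) then provides a homeomorphism $\widehat{\widetilde{g}}: \overline{\varphi_\alpha(\Omega_1)} \to \overline{\varphi_\alpha(\Omega_2)}$ extending $\widetilde{g}$. I would then define
\begin{equation*}
    \hat{g} := \varphi_\alpha^{-1} \circ \widehat{\widetilde{g}} \circ \varphi_\alpha : \overline{\Omega_1} \to \overline{\Omega_2},
\end{equation*}
which is a homeomorphism as a composition of homeomorphisms and agrees with $g = \varphi_\alpha^{-1} \circ \widetilde{g} \circ \varphi_\alpha$ on $\Omega_1$, hence extends $g$ continuously.

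For the ``moreover'' claim, the explicit formula
\begin{equation*}
    \varphi_\alpha^{-1}(u+iv) = \left(\sign(u)\,[(\alpha+1)|u|]^{\frac{1}{\alpha+1}},\, v\right)
\end{equation*}
shows that the first coordinate of $\varphi_\alpha^{-1}(w)$ vanishes exactly when $\operatorname{Re} w = 0$, and symmetrically $\operatorname{Re}\varphi_\alpha(x,y) = 0 \iff x=0$. Therefore $\hat{g}_1(x,y) = 0$ is equivalent to $\operatorname{Re}\widehat{\widetilde{g}}(\varphi_\alpha(x,y)) = 0$. Inside $\Omega_1$ this is immediate from the characterisation of $\widetilde{g}$; to pass to $\overline{\Omega_1}$, I would combine the continuity of $\widehat{\widetilde{g}}$ with the fact that a Carath\'eodory homeomorphism matches boundary components, so that the zero components of $\widetilde{g}_1$ on $\overline{\varphi_\alpha(\Omega_1)} \cap \{x=0\}$ correspond bijectively to those of $\overline{\varphi_\alpha(\Omega_2)} \cap \{x=0\}$.

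In my view, the main obstacle is precisely this last verification of the zero set on the boundary. Continuity alone only yields $\hat{g}_1 = 0$ on the closure of $\Omega_1 \cap \{x=0\}$, and the reverse inclusion — ruling out additional zeros of $\hat{g}_1$ on $\partial\Omega_1 \cap \{x=0\}$ and additional non-zeros — appears to require a topological component-matching argument in the style of assertion (d) of Theorem \ref{main theorem}, transplanted to the boundary via the Carath\'eodory homeomorphism.
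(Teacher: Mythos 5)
Your construction of the extension coincides with the paper's argument: $\varphi_\alpha$ is a global homeomorphism, so $\varphi_\alpha(\Omega_i)$ are bounded simply--connected Jordan domains; $\widetilde g=\varphi_\alpha\circ g\circ\varphi_\alpha^{-1}$ is conformal with $\widetilde g_1=0$ iff $x=0$ by Theorem \ref{important theorem}; the classical Carath\'eodory theorem extends $\widetilde g$, and conjugating back gives the homeomorphism $\hat g:\overline{\Omega}_1\to\overline{\Omega}_2$. This part is correct and complete. The genuine gap is exactly the one you flag yourself: the biconditional $\hat g_1=0\Leftrightarrow x=0$ on $\overline{\Omega}_1$. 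You defer the reverse implication (no zeros of $\hat g_1$ off the line $\{x=0\}$) to an unexecuted ``boundary component-matching'' argument, and that is precisely the part of the statement that still needs a proof; as stated, your proposal does not establish it.

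The paper closes this without any component analysis, by a symmetry trick you are missing. By Corollary \ref{basic properties of conformal}, $g^{-1}$ is again an orientation preserving conformal map of the required Sobolev class, so the same construction extends it to a homeomorphism $\widehat{g^{-1}}:\overline{\Omega}_2\to\overline{\Omega}_1$, and by continuity of both extensions $\hat g^{-1}=\widehat{g^{-1}}$. The forward inclusion ($x=0$ forces vanishing of the first coordinate of the extension) is obtained in the paper by continuity from $g_1=0$ on $\Omega_1\cap\{x=0\}$; applying that inclusion to $g^{-1}$ yields the reverse one for $g$: if $\hat g_1(p)=0$, then $q:=\hat g(p)$ lies on $\{x=0\}$, hence $p=\widehat{g^{-1}}(q)$ has vanishing first coordinate, i.e. $p\in\{x=0\}$. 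This replaces the delicate boundary matching you sketch. One further remark: your caution that continuity only gives $\hat g_1=0$ on the closure of $\Omega_1\cap\{x=0\}$, which may be smaller than $\overline{\Omega}_1\cap\{x=0\}$ (e.g. at a boundary point where $\partial\Omega_1$ merely touches the line), is warranted --- the paper's proof asserts the forward inclusion ``by continuity'' without addressing this, so on that point your analysis is actually sharper than the paper's, but it does not substitute for the missing reverse implication.
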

\begin{proof}
    The proof is straightforward. Let $\Omega_1$ and $\Omega_2$ be bounded, simply--connected Jordan domains in $G^2_{\alpha}$. Let $\widetilde{g}:\varphi_{\alpha}(\Omega_1) \to \varphi_{\alpha}(\Omega_2)$ be defined as
    \begin{equation*}
        \widetilde{g} := \varphi_{\alpha} \circ g \circ \varphi_{\alpha}^{-1}.
    \end{equation*}
    By Theorem \ref{important theorem} mapping $\widetilde{g}$ is conformal and by properties of homeomorphisms $\varphi_{\alpha}(\Omega_1)$ and $\varphi_{\alpha}(\Omega_2)$ are bounded simply--connected domains in $\mathbb{C}$. Moreover, since $\varphi_{\alpha}$ is in fact homeomorphism on the whole plane, we have for homeomorphisms $h_i$, giving that $\partial \Omega_i$ are Jordan curves for $i=1,2$, that
    \begin{equation*}
        \varphi_{\alpha}(\partial \Omega_i) = \varphi_{\alpha}(h_i(S^1)) \qquad \textrm{for } i=1,2,
    \end{equation*}
    where $S^1$ denotes the unit circle. Thus, $\varphi_{\alpha}(\Omega_1)$ and $\varphi_{\alpha}(\Omega_2)$ are Jordan domains. From Theorem \ref{Caratheodory theorem} mapping $\widetilde{g}$ extends to a homeomorphism defined on $\overline{\varphi_{\alpha}(\Omega_1)}$ and similarly, we can extend $g$ to $\hat{g}$ defined on $\overline{\Omega}_1$. Observe that $\hat{g}(\overline{\Omega}_1)=\overline{\Omega}_2 $. Indeed, it is clear that $\hat{g}(\partial \Omega_1) \subset \partial \Omega_2$. If $\hat{g}(\partial \Omega_1) \neq \partial \Omega_2$ then it would contradict the fact that Jordan curve $\partial \Omega_2$ splits the plane into two connected components. It is sufficient to show that $\hat{g}_1 = 0$ if and only if $x=0$. By continuity of $\hat{g}$, we have that $\hat{g}_1 =0$ for $x=0$. Similarly, $g^{-1}:\Omega_2 \to \Omega_1$ can be extended to a homeomorphism defined on $\overline{\Omega}_2$. Moreover, by continuity of both extensions 
    \begin{equation*}
    \hat{g}^{-1} = \widehat{g^{-1}}.
    \end{equation*}
    Therefore, $\hat{g}_1 =0$ implies $x=0$. 
\end{proof}

\end{document}